\newtheorem{theorem}{Theorem}
\newtheorem{proposition}[theorem]{Proposition}
\newtheorem{lemma}[theorem]{Lemma}
\newtheorem{corollary}[theorem]{Corollary}
\theoremstyle{definition}
\newtheorem{remark}[theorem]{Remark}
\newtheorem{example}[theorem]{Example}
\numberwithin{theorem}{section}
\newcommand{\PP}{\mathbb{P}}
\newcommand{\RR}{\mathbb{R}}
\newcommand{\QQ}{\mathbb{Q}}
\newcommand{\CC}{\mathbb{C} }
\newcommand{\NN}{\mathbb{N}}
\newcommand{\p}{\partial}
\DeclareMathOperator{\im}{Im}
\DeclareMathOperator{\Hom}{Hom}
\DeclareMathOperator*{\Ass}{Ass}
\DeclareMathOperator{\Sol}{Sol}
\DeclareMathOperator{\Ker}{Ker}
\DeclareMathOperator{\Gr}{Gr}
\algrenewcommand\algorithmicrequire{\textbf{Input:}}
\algrenewcommand\algorithmicensure{\textbf{Output:}}
\title{\bf Making Waves}
\author{Marc H\"ark\"onen, Jonas Hirsch and Bernd Sturmfels}
\date{}
\begin{document}
\maketitle

\begin{abstract}
\noindent
We study linear PDE constraints for
vector-valued functions and distributions.
Our focus lies on
wave solutions. These
 give rise to distributions
with low-dimensional support. 
Special waves
from vector potentials are represented by
syzygies. We parametrize all waves
by projective varieties derived from the support of the PDE.
These include determinantal varieties and Fano varieties,
and they generalize wave cones in analysis.
 \end{abstract}

 \section{Introduction}

We consider \emph{homogeneous}, \emph{linear} systems of partial differential equations with \emph{constant} coefficients. In what follows, the acronym PDE 
 refers to such systems. Fusing notational conventions from \cite{AHS21, CS}
 and \cite{ADHR19, DPR}, we study PDE for unknown functions  $\phi \colon \RR^n \to \CC^k$. 
 If the system has $\ell $ constraints then we write the PDE as an
 $\ell \times k$ matrix $A = (a_{ij}) $ with entries in the polynomial ring $R=\CC[\p_1,\dotsc,\p_n]$.
 We  assume that all entries $a_{ij}$ are homogeneous of the same degree $d$,
 so $d$ is the order of our PDE. 
The action of $A$ on $\phi = \phi(x)$ results in a column vector $A \bullet \phi$ of length $\ell$.
Its $i$th entry equals $ \sum_{j=1}^k a_{ij} \bullet \phi_j $, where
$a_{ij}(\partial)$ acts on $\phi(x)$ via $\p_s = \frac{\partial}{\partial x_s}$.
The $\ell$ rows of $A$ generate a submodule $M =  \im_R(A^T)$
of the free module $R^k$. 

The solutions $\phi$ to $A$ depend only on the module $M$.
They are drawn from appropriate spaces of  distributions
 \cite{hormander_vol_I}. The spaces we need will be reviewed in Section \ref{sec2}.
The {\em support} of  $M$ is the variety
defined by the $k \times k$ minors of $A$.
If the matrix $A$ has rank less than $k$, for instance if $\ell < k$, then
these minors vanish identically.
We are here interested in such cases.

\begin{example}[$n=3, k=4, \ell=2$] \label{ex:eins}
We illustrate our setup for the PDE 
\begin{equation} \label{ex:einsA}
A \quad = \quad \begin{bmatrix}
\p_1 & \p_2 & \p_3 & 0 \\
0 & \p_1 & \p_2 & \p_3 
\end{bmatrix}.
\end{equation}
Solutions to $A$ are functions $\phi : \RR^3 \rightarrow \CC^4$ that satisfy
$A \bullet \phi = 0$. In coordinates, this reads
  \begin{small} $$
\frac{\partial \phi_1}{\partial x_1} +
\frac{\partial \phi_2}{\partial x_2} +
\frac{\partial \phi_3}{\partial x_3} \,\,= \,\,
\frac{\partial \phi_2}{\partial x_1} +
\frac{\partial \phi_3}{\partial x_2} +
\frac{\partial \phi_4}{\partial x_3} \,\,= \,\, 0. 
$$ \end{small}
To find some solutions, we write the kernel of $A$ as the image 
 of the skew-symmetric matrix
$$
B \quad = \quad \begin{small}
\begin{bmatrix} 
0 &  -\p_3^2 &  \p_2 \p_3 &\p_1 \p_3  -\p_2^2\, \\
\p_3^2 &  0 &  -\p_1 \p_3 &  \p_1 \p_2 \\
-\p_2 \p_3 &      \p_1 \p_3 &  0 &  -\p_1^2 \\ 
\,\p_2^2-\p_1 \p_3 &  -\p_1 \p_2 &  \p_1^2 &  0
 \end{bmatrix}. \end{small}
$$
  For any function $\psi \colon \RR^3 \to \CC^4$ we get a solution $\phi = B\bullet \psi$, 
  since $A \bullet (B \bullet \psi) = (A B) \bullet \psi = 0$.
  We call $\psi$ a \emph{vector potential}.
For instance, if $\varphi$ is any scalar function and $\psi = \varphi \cdot e_1$, then
\begin{equation} 
\label{eq:psi}
\phi(x) \,\,\,= \,\,\,
(B \bullet \psi)(x) \,\, = \,\, 
(B e_1 \bullet \varphi)(x) \,\, = \,\,\begin{small} 
\biggl[ \, 0 \,\,,\, \frac{\partial^2 \varphi}{ \partial x_3^2} \, , \,
-\frac{\partial^2 \varphi}{ \partial x_2 \partial x_3}\, , \,
\frac{\partial^2 \varphi}{ \partial x_2^2} -\frac{\partial^2 \varphi}{ \partial x_1 \partial x_3}\,
\biggr]^{\! T}. \end{small}
\end{equation}
 The matrix $B$  has rank two, so its column
span over the field $K = \CC(\p_1,\p_2,\p_3)$ has a basis of
  two vectors in $K^4$. For readers of \cite{AHS21} we 
 note that $M = \im_R(A^T) $ 
is a $\{0 \}$-primary submodule of multiplicity $2$ in 
$R^4 $. The {\tt Macaulay2} command {\tt solvePDE} outputs 
two basis vectors. \hfill $\triangle$
\end{example}

This project arose from our desire to understand the hierarchy of
 wave cones in \cite{ADHR19}. These are subsets of $\CC^k$
 which play an important role  in the regularity theory of PDE. 
Recent work on the algebraic side in \cite{CC, CS}
pioneered {\em differential primary decompositions}, where modules
$M$ are encoded by their associated primes along with
certain vectors of length $k$, known as
{\em Noetherian operators}. Our presentation connects these threads
  from analysis and algebra. Along the way, we give an exposition of
known results from control theory \cite{O, oberst_book, shankar99, shankar_notes}.

The waves in our catchy title
are special solutions to the PDE. We develop
geometric theory and algebraic algorithms  for finding them.
Our point of departure is the {\em simple wave}
\begin{equation}
\label{eq:simplewave}
\phi_{\xi,u}(x) \,\,\,= \,\,\, {\rm exp}(i \xi \cdot x) \cdot u.
\end{equation}
Here $\xi \in \RR^n$, $u \in \CC^k$ and $i = \sqrt{-1}$.
The exponential function is applied to the dot product of
 $x = (x_1,\ldots,x_n)$ with the purely imaginary vector $i \xi$,
 resulting in trigonometric functions.
 The real vector $\xi $ is the {\em frequency}, while
 the complex vector $u$ is the {\em amplitude}.
Simple wave solutions  are characterized by a system of
$\ell$ polynomial equations in their $n+k$ coordinates:
\begin{equation}
\label{eq:Au0}
 A \bullet  \phi_{\xi,u} = 0 \quad \hbox{if and only if}
\quad
A(\xi) \cdot u = 0 . 
\end{equation}
For the PDE in Example \ref{ex:eins}, 
setting $\varphi (x) = {\rm exp}( i \xi \cdot x)$ in (\ref{eq:psi}) yields a
simple wave solution. 

Our standing assumption is that
 all entries of the matrix $A$ are homogeneous polynomials in $\p_1,\ldots,\p_n$ of the same degree $d$.
 This implies that
$A$ is elliptic---therefore smoothing---if and only if there are no nontrivial wave solutions. 
Therefore, the existence of wave solutions has a major impact on the analytical properties of the operator, cf.~\cite[Chapter 2.1]{schechter77}.

Wave solutions are obtained from
superpositions of simple waves and taking limits.
In the superpositions we allow here, the amplitude $u$ is fixed,
whereas the frequency $\xi $ runs over linear subspaces of $\RR^n$
all of whose points satisfy (\ref{eq:Au0}). Taking limits of such superpositions leads to
waves that are distributions with small support. This construction
will be explained in detail in Section~\ref{sec2},
where we also review  basics on 
 spaces of functions and distributions.

In Section \ref{sec3} we construct 
solutions that arise from a vector potential.
If $M$ is $\{0 \}$-primary then all solutions to $A$ have that property.
This is based on results in commutative algebra and control theory,
notably due to Shankar  \cite{shankar99, shankar_notes}.
It can be used to build distributional solutions that are
compactly supported and act like
waves in the interior of their support.

In Section \ref{sec4} we turn to algebraic geometry,
and we introduce  projective varieties that
parametrize wave solutions. These generalize
the determinantal varieties of matrices of linear forms.
In Section \ref{sec5} we examine the analytic meaning
of wave varieties and obstruction varieties, and 
discuss the analytic implications of working algebraically in
complex projective spaces.
In Section \ref{sec6} we introduce varieties of wave pairs. These
generalize Fano varieties \cite[Example 6.19]{Harris} inside Grassmannians.
We present methods for computing
wave pairs and wave varieties, and we illustrate these
on several examples. In the context of a given PDE $A$,
 these scenarios give interesting
distributional solutions to $A$ 
with low-dimensional support.

We close the introduction with a well-known equation from the theory of
 elasticity~\cite{GP, muskhelishvili}.

\begin{example}[Saint-Venant's tensor]
Set $d=2,k  = \binom{n+1}{2},\ell = k^2$, and identify $\CC^k$ with the space of symmetric $n \times n$ matrices.
We consider
 matrix-valued distributions $\phi: \RR^n \rightarrow \CC^k$.
The {\em Saint-Venant operator} $A$ characterizes the kernel of the 
$2$-dimensional X-ray transform:
\begin{equation}
\label{eq:SV}
  A \bullet \phi \,\,=\,\, \bigl( \p_i\p_j \phi_{ab} + \p_a\p_b \phi_{ij} - \p_i\p_a \phi_{jb} - \p_j \p_b \phi_{ia} \bigr)_{i,j,a,b = 1,\dotsc,n}.
\end{equation}
In our notation, $A$ is an $\ell \times k$ matrix whose nonzero entries are quadratic monomials $\p_i \p_j$.
By removing redundant rows, using \cite{GP}, the number
of rows of $A$ can be reduced to 
$\ell = \frac{1}{6} \binom{n^2}{2}$.
The PDE $A$ has a  vector potential $B$, as in Section \ref{sec3}, given by the symmetric gradient:
$$
  B \bullet \psi \,\,=\,\, \bigl( \, \partial_i \psi_j + \partial_j \psi_i\,\bigr)_{i,j = 1,\dotsc,n}.
$$
The wave pair variety $\mathcal{P}_A^{n-1}$ of Section \ref{sec6} lives in the space
 $\PP^{n-1} \times \PP^{k-1}$, and it coincides with
the incidence variety $\mathcal{I}_A$ in
(\ref{eq:IA}). It is defined by the
   $3 \times 3$ minors of  the $(n+1) \times (n+1)$-matrix
\begin{equation}
\label{eq:augintro}
    \begin{bmatrix}
      0 & y_1 & y_2 & \cdots & y_n \\
      y_1 & z_{11} & z_{12} & \cdots & z_{1n} \\
      y_2 & z_{12} & z_{22} & \cdots & z_{2n} \\
      \vdots & \vdots & \vdots & \ddots & \vdots \\
      y_n & z_{1n} & z_{2n} & \cdots & z_{nn}
    \end{bmatrix}.
\end{equation}
 The wave variety $\mathcal{W}_A \subset \PP^{k-1}$ of Section \ref{sec4} 
 is given by the $3 \times 3$ minors of the $n \times n$ matrix~$(z_{ij})$.
  This example is a variant of the curl operator in Proposition~\ref{prop:curl},
with (\ref{eq:augintro}) playing the role of (\ref{eq:augmentedmatrix}).
 It underscores the relevance of nonlinear algebra \cite{MS} for the physical sciences.
 \hfill $\triangle$
\end{example}

\section{Spaces and Waves}
\label{sec2}

In real analysis it is customary to consider functions from the
real space $\RR^n$ to the complex space $\CC^k$. By
passing to the real part or the imaginary part, one
obtains a function from $\RR^n$ to $\RR^k$.
For instance, the real part of the simple wave
(\ref{eq:simplewave}) is an expression in terms of
sine and cosine.
If $\phi$ satisfies a linear PDE $A$ with real coefficients
then its real part satisfies $A$.

We now fix the setting found in standard analysis texts,
such as H\"ormander's book \cite{hormander_vol_I}.
Fix an open convex subset $\Omega \subset \RR^n$.
Let $C_c^\infty(\Omega,\CC^k)$ denote the space 
of smooth functions $f \colon \RR^n \to \CC^k$
whose support is compact and contained in $\Omega$.
One writes $\mathcal{D} = C_c^\infty(\Omega,\CC^k)$
for the same space after endowing it with 
the topology of sequential convergence.
Here, $\{f_k\}$ converges to $f$ in $\mathcal{D}$
if there exists a compact set $K \subset \Omega$,
containing the supports of $f$ and of each $f_k$,
such that the norm of 
$\,\partial^\alpha  f_k - \partial^\alpha f\,$ converges
to zero for each $\alpha \in \NN^n$.
Thus, $\mathcal{D}$ is a topological vector space
over $\CC$ which is a module over
$R = \CC[\p_1,\ldots,\p_n]$ by differentiation.

The space of distributions $\mathcal{D}'$ is a subspace of the
vector space dual to $\mathcal{D}$. Its elements
are the linear functions $\mathcal{D} \rightarrow \CC$ that are continuous in the topology above.
This can be expressed by the following growth condition.
  A \emph{distribution} is a linear functional $\phi \colon \mathcal{D} \to \mathbb{C}$ such that,
   for every compact set $K \subset \mathbb{R}^n$, there exist
   positive real constants $C$ and $ d$ with
$$
    |\phi(f)| \,\,\leq \,\,C \sum_{|\alpha| \leq d} \sup |\partial^\alpha f| \qquad
\hbox{for all $f \in \mathcal{D}$ with ${\rm supp}(f) \subseteq K.$}
  $$
  We denote by $\mathcal{D}'$ the space of distributions from
  $\Omega$ to $\CC^k$.
  Every compactly supported  smooth function is also a distribution.
  Namely, there is a natural inclusion from $\mathcal{D}$ into $\mathcal{D'}$
which takes a function  $\phi : \RR^n \to \CC^k$ to 
the linear functional $\mathcal{D} \rightarrow \CC, f \mapsto \int f \cdot \phi$ for all $f \in \mathcal{D}$.
Other prominent examples of distributions are the Dirac delta functions and the step functions.

We now come to the special role played by exponential functions.
For this, we introduce the {\em Schwartz space}
$\mathcal{S} = \mathcal{S}(\RR^n,\CC^k)$ whose elements are
smooth functions $f $ such that
$||x^\beta \partial^\alpha f||_\infty $ is finite for all $\alpha,\beta \in \NN^n$.
This space includes the simple waves (\ref{eq:simplewave}), since the coordinates
of $\xi$ are real. However,
many nice functions, such as polynomials, are not in $\mathcal{S}$.

Most relevant for us is that  $\mathcal{D} = C^\infty_c(\Omega,\CC^k)$
 is a subspace of $\mathcal{S} = \mathcal{S}(\RR^n,\CC^k)$.
The key feature of the Schwartz space is the endomorphism
known as the {\em Fourier transform}
$\,\hat{} : \mathcal{S} \rightarrow \mathcal{S}$.
By applying $\,\,\hat{}\,\,$ twice, we see that every function in $\mathcal{S}$
admits an integral representation
\begin{equation}
\label{eq:fourier} f(x) \,\,\, = \,\,\,
\int_{\RR^n} \!
{\rm exp}(\,2 \pi i \,\xi \cdot x)\, {\hat f}(\xi) \,d \xi. 
\end{equation}
The dual to the Schwarz space consists of the {\em tempered distributions}.
We have inclusions
\begin{equation}
\label{eq:tempered}
 \mathcal{D}\, \hookrightarrow \,\mathcal{S} \,\hookrightarrow\,
\mathcal{S}' \,\hookrightarrow \, \mathcal{D}'.
\end{equation}
All of these spaces are $R$-modules because the linear
map $\partial^\alpha : \mathcal{D} \rightarrow \mathcal{D}$
is continuous, so we get a dual
$ (\partial^\alpha)^* : \mathcal{D}' \rightarrow \mathcal{D}'$
which restricts to $\mathcal{S}'$ and $\mathcal{S}$.
One subtle issue here is the sign. The action of $R = \CC[\p_1,\ldots,\p_n]$
on distributions by taking partial derivatives satisfies
$$ (\partial_i \bullet \phi) (f)\,\, = \,\,-\phi \bigl( {\partial f/ \partial x_i}\bigr) . $$

The notion of waves used in this paper arises from
superpositions of the simple waves~(\ref{eq:simplewave}),
\begin{equation}
\label{eq:superposition} \phi(x) \,\,=\,\, \sum_{j=1}^p \lambda_j \phi_{\xi_j,u} (x) . 
\end{equation}
Here the amplitude $u \in \CC^k$ is fixed but
 the frequencies $\xi_1,\ldots,\xi_p $ vary in $  \RR^n$.
The coefficients $\lambda_1,\ldots,\lambda_p$ are
complex numbers. If each summand in (\ref{eq:superposition}) satisfies $A$ then so does~$\phi(x)$.
The integral representation (\ref{eq:fourier}) of Schwartz functions 
by the Fourier transform implies that
every distribution $\delta \in \mathcal{D}'$ can be approximated by a
 sequence of waves $\phi^{(1)}, \phi^{(2)}, \ldots $ of the form~\eqref{eq:superposition}.
 
\begin{lemma} \label{lem:expdense}
The linear span of the exponential functions $x  \mapsto {\rm exp}(i\xi \cdot x)$ is dense in $\mathcal{D}'$.
\end{lemma}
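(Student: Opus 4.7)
The plan is to argue by duality. Equip $\mathcal{D}'$ with its weak-$*$ topology, so that $\phi^{(m)} \to \phi$ means $\phi^{(m)}(f) \to \phi(f)$ for every test function $f \in \mathcal{D}$. Under this topology the continuous dual of $\mathcal{D}'$ is $\mathcal{D}$ itself, and the Hahn--Banach theorem says that a linear subspace $V \subset \mathcal{D}'$ is dense iff its annihilator in $\mathcal{D}$ is zero. I would therefore reduce the lemma to the claim: if $f \in \mathcal{D}$ satisfies $\phi_{\xi,u}(f) = 0$ for every $\xi \in \RR^n$ and $u \in \CC^k$, then $f \equiv 0$.

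Next I would unpack the annihilation condition. Each simple wave $\phi_{\xi,u}(x) = \exp(i\xi \cdot x)\, u$ is a smooth bounded function, so it acts on $f \in \mathcal{D}$ by integration:
\[
\phi_{\xi,u}(f) \,\,=\,\, \int_{\RR^n} \exp(i\xi \cdot x)\,\bigl(u \cdot f(x)\bigr)\,dx \,\,=\,\, u \cdot \hat f(-\xi/(2\pi)),
\]
up to the $2\pi$-normalization fixed in \eqref{eq:fourier}. Letting $u$ range over a basis of $\CC^k$ and $\xi$ range over $\RR^n$ forces every component of $\hat f$ to vanish identically on $\RR^n$.

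To finish, I would invoke the Fourier inversion formula \eqref{eq:fourier}. Because $\mathcal{D} \subset \mathcal{S}$ by \eqref{eq:tempered}, the Fourier transform restricts to an automorphism of the Schwartz space on which $f$ lives; hence $\hat f \equiv 0$ implies $f = 0$, and the lemma follows.

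I do not expect a genuine obstacle. The only care needed is bookkeeping of the sign and $2\pi$ factors that relate the conventions in \eqref{eq:simplewave} and \eqref{eq:fourier}, and confirming that the integration pairing above really is the canonical inclusion $\mathcal{D} \hookrightarrow \mathcal{D}'$ recalled in Section \ref{sec2}. Everything else is a routine weak-$*$ duality argument combined with the injectivity of the Fourier transform on $\mathcal{S}$.
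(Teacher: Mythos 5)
The proposal is correct, and it takes a genuinely different route from the one the paper implicitly suggests. The paper does not actually give a proof of Lemma~\ref{lem:expdense}; instead, the sentence preceding it indicates the intended argument: use Fourier inversion~\eqref{eq:fourier} to represent a Schwartz function as an integral over exponentials, approximate that integral by Riemann sums (which are finite superpositions of exponentials), and invoke density of $\mathcal{S}$ (or $\mathcal{D}$) in $\mathcal{D}'$. That route is constructive but requires checking that the Riemann sums actually converge in $\mathcal{D}'$. Your approach dualizes instead: you equip $\mathcal{D}'$ with the weak-$*$ topology, use the Hahn--Banach theorem together with the fact that the continuous dual of $(\mathcal{D}', \sigma(\mathcal{D}',\mathcal{D}))$ is $\mathcal{D}$, and reduce density to the injectivity of the Fourier transform on $\mathcal{D} \subset \mathcal{S}$. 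This buys you a cleaner argument that sidesteps any explicit approximation, at the cost of invoking a bit more locally convex machinery (and a reflexivity remark if one prefers to phrase density in the strong dual topology, since the two topologies have the same closed subspaces thanks to $\mathcal{D}$ being Montel). One small point of precision: the lemma as stated names scalar exponentials, while $\mathcal{D}'$ is $\CC^k$-valued; your reading that the span includes $\exp(i\xi\cdot x)\,u$ with $u\in\CC^k$ is the intended one and matches the paper's later usage, but it is worth stating explicitly. With that clarified, the reduction to $\hat f \equiv 0 \Rightarrow f = 0$ via components $u = e_j$ is exactly right, and there is no gap.
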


Our aim is to create interesting distributions by taking limits of waves (\ref{eq:superposition})
in $\mathcal{D}'$.
To this end, suppose that $\xi_1,\ldots,\xi_p$ span a 
linear subspace $\pi$ of $\RR^n$ such that
$A \bullet \phi_{\xi,u} = 0$ for all $\xi \in \pi$.
We then consider the closure in $\mathcal{D}'$
of the space of all waves (\ref{eq:superposition}) whose
frequencies $\xi_j$ are in $\pi$. Each element in that closure
satisfies the PDE $A$, and the closure contains
distributional solutions with small support. This motivates the following definitions.
As before, $A \in R^{\ell \times k}$ is a matrix whose entries are homogeneous of degree $d$. A {\em wave pair} for $A$ is a pair $(u, \pi)$, where
$u \in \CC^k$ and $\pi$ is a linear subspace of $\RR^n$, such that
$A(\xi) u = 0$ for all $\xi \in \pi$. If  $(u,\pi)$ is a wave pair then any
superposition (\ref{eq:superposition}) with $\xi_1,\ldots,\xi_p \in \pi$ is  a
{\em classical wave solution} of $A$. 
A {\em wave solution} to $A$ is any
distribution  in the closure in $\mathcal{D}'$
of the classical wave solutions.

\begin{proposition}\label{prop:wave_sols}
Consider any wave pair $(u,\pi)$ for the operator $A$ and set
  $r = {\rm codim}(\pi)$. 
 The associated wave solutions are 
precisely the distributions of the form 
\begin{equation}
 \label{eq:wavesolution}
\qquad  \phi (x) \,=\,\delta\bigl(L_1(x),\ldots,L_{n-r}(x) \bigr) \cdot u,
\end{equation}
where $L_1, \dotsc, L_{n-r}$ are linear form satisfying $ \pi^\perp = \{x \in \RR^n: L_1(x) = \cdots = L_{n-r}(x) = 0\}$ and $\delta$ is any distribution in $\mathcal{D}'(\RR^{n-r},\CC)$. 
Thus, equation \eqref{eq:wavesolution} characterizes wave pairs as follows: if $\phi(x)$ is a solution to the PDE $A$ for all $\delta \in \mathcal{D}'(\RR^{n-r},\CC)$ then $(u, \pi)$ is a wave pair.
 \end{proposition}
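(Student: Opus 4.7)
My plan is to reduce everything to Lemma \ref{lem:expdense} via the pullback of distributions along the linear surjection $L \colon \RR^n \to \RR^{n-r}$, $x \mapsto (L_1(x), \ldots, L_{n-r}(x))$. By the hypothesis $\ker L = \pi^\perp$, the transpose $L^T$ is a linear isomorphism from $\RR^{n-r}$ onto $\pi$. Under it, every simple wave $\exp(i\xi\cdot x)\cdot u$ with $\xi \in \pi$ is precisely the pullback along $L$ of the simple wave $\exp(i\eta\cdot y)\cdot u$ on $\RR^{n-r}$ with $\xi = L^T\eta$. This observation is the bridge between the two sides of the claimed equality.

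The key technical tool will be the pullback $L^* \colon \mathcal{D}'(\RR^{n-r},\CC) \to \mathcal{D}'(\RR^n,\CC)$, defined by duality from the continuous fiberwise integration $L_* \colon \mathcal{D}(\RR^n,\CC) \to \mathcal{D}(\RR^{n-r},\CC)$. I would verify that $L^*$ is weak-$*$ continuous and injective, and that its image is the closed subspace
\[
  L^* \mathcal{D}'(\RR^{n-r},\CC) \;=\; \{\,\phi \in \mathcal{D}'(\RR^n,\CC) : \p_v \phi = 0 \text{ for all } v \in \pi^\perp\,\},
\]
which is closed because each $\p_v$ is continuous on $\mathcal{D}'$. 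Establishing this closedness and the characterization of the image is the main obstacle; once it is in place, everything else is bookkeeping.

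For the forward inclusion, Lemma \ref{lem:expdense} lets me approximate any $\delta \in \mathcal{D}'(\RR^{n-r},\CC)$ by finite exponential sums $\delta^{(N)} = \sum_j \lambda_{j,N}\exp(i\eta_{j,N}\cdot y)$. Applying $L^*$ and multiplying by $u$ yields classical wave solutions $(L^*\delta^{(N)})\cdot u$ for $(u,\pi)$ which converge in $\mathcal{D}'(\RR^n,\CC^k)$ to $\delta(L(x))\cdot u$, so the latter is a wave solution. Conversely, a classical wave solution $\sum_j \lambda_j \exp(i\xi_j\cdot x)\cdot u$ with $\xi_j\in\pi$ is exactly $(L^*\delta_0)\cdot u$ for $\delta_0 = \sum_j \lambda_j\exp(i\eta_j\cdot y)$ and $\xi_j = L^T\eta_j$. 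Given a limit $\phi = \lim\phi^{(N)}$ of such classical waves, I assume $u\neq 0$ (the case $u=0$ is trivial) and pick $\mu \in (\CC^k)^*$ with $\mu(u)=1$: the scalars $\mu\circ\phi^{(N)} = L^*\delta^{(N)}$ converge in $\mathcal{D}'(\RR^n,\CC)$, so by closedness of the image the limit equals $L^*\delta$ for some $\delta$; passing the identity $\phi^{(N)} = (\mu\circ\phi^{(N)})\cdot u$ to the limit produces $\phi = \delta(L(x))\cdot u$.

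The final characterization claim is then an easy testing argument: if $\delta(L(x))\cdot u$ solves $A$ for every $\delta \in \mathcal{D}'(\RR^{n-r},\CC)$, I test with $\delta_\eta(y) = \exp(i\eta\cdot y)$ for $\eta \in \RR^{n-r}$ to obtain the simple wave $\exp(i(L^T\eta)\cdot x)\cdot u$ as a solution of $A$. By \eqref{eq:Au0} this forces $A(L^T\eta)\,u = 0$, and since $L^T$ surjects onto $\pi$, we conclude $A(\xi)u=0$ for all $\xi \in \pi$, so $(u,\pi)$ is a wave pair.
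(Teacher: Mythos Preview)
Your proposal is correct and follows essentially the same route as the paper: both reduce the statement to the exponential case via Lemma~\ref{lem:expdense}, using that the map $\eta \mapsto L^T\eta$ is a bijection from $\RR^{n-r}$ onto $\pi$ so that the simple waves $\exp(i\eta\cdot Lx)\cdot u$ are exactly the classical wave solutions for $(u,\pi)$, and the final characterization is handled identically by testing on the $\delta_\eta$. The main difference is one of rigor rather than strategy: the paper's proof simply invokes density and declares ``the first assertion follows,'' whereas you correctly identify and address the point the paper leaves implicit---namely that the pullback $L^*$ is weak-$*$ continuous with closed image in $\mathcal{D}'(\RR^n,\CC)$, which is what makes the passage from ``closure of exponential sums in $\delta$'' to ``closure of classical waves in $\phi$'' legitimate in both directions. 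Your linear-functional trick to recover $\delta$ from the limit is a clean way to handle the reverse inclusion.
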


 \begin{remark}\label{rmk:composition}
   The notation $\delta(L\cdot) := \delta(L_1(x), \dotsc, L_{n-r}(x))$ refers to an
      extension from smooth functions to distributions. Following \cite[Chapter 6]{hormander_vol_I},
   one can define it as follows.
   Given a real matrix $L \in \RR^{(n-r)\times r}$, fix an orthonormal basis $v_1,\dotsc, v_r$ for $\ker(L)$ and let $L'\in \RR^{r\times n}$ be the matrix with the $v_i$ as rows.
   The matrix $H = \begin{bmatrix}
     L \\ L'
     \end{bmatrix}$ defines an endomorphism $\RR^n \to \RR^n,\,x \mapsto (y',y'')$, where $y' \in \RR^{n-r}$, $y'' \in \RR^r$.
     Its inverse is $H^{-1} = \begin{bmatrix}
   L^T(LL^T)^{-1} & L'^T
   \end{bmatrix}$.
If $\delta \colon \RR^{n-r} \to \CC$ is smooth,
   then,  by a change of variables, for any test function $f \in \mathcal{D}(\RR^n, \CC)$,
$$          \begin{matrix}
     \delta(L \cdot)(f) \,=\, \int_{\RR^n} \delta(L(x)) f(x) \,dx \,=\, \int_{\RR^n} \delta(y') f(H^{-1}y) |\det(H^{-1})| \,dy \qquad 
  \qquad    \smallskip \\ \qquad \quad
      \,=\, \frac{1}{\sqrt{\det(LL^T)}}\int_{\RR^n} \delta(y') f(H^{-1}y) \,dy 
      \,=\, \frac{1}{\sqrt{\det(LL^T)}}\int_{\RR^{n-r}} \delta(y') \int_{\RR^r} f(H^{-1}y) \, dy''\,dy'.
   \end{matrix} $$
   We write this as
 $\,    \delta(L\cdot)(f) = \delta(1(F))$, with the constant function
  $1 : \RR^r \to \CC$  and
       $F(y) = \frac{1}{\sqrt{\det(LL^T)}}f(H^{-1}y)$.
   There exists a unique distribution $\delta \otimes 1$ such that $(\delta \otimes 1)(F) = \delta(1(F)) = 1(\delta(F))$ for all $F \in \mathcal{D}(\RR^n,\CC)$.
   Now define $\delta(L\cdot)(f) := (\delta \otimes 1)(F)$ for \emph{arbitrary} distributions~$\delta$.
    \end{remark}

 \begin{proof}[Proof of Proposition~\ref{prop:wave_sols}]
    Write $x = (x_1,\dotsc,x_n)$ and $y = (y_1,\dotsc,y_{n-r})$ for the coordinates of $\RR^n$ and $\RR^{n-r}$, and let $L$ denote the $(n-r) \times n$ matrix of coefficients of  $L_1,\dotsc, L_{n-r}$.
    For $\eta \in \RR^{n-r}$, consider the wave function $x \mapsto \delta_\eta(Lx)\cdot u$ associated with the exponential function $\delta_\eta(y) = \exp(i\eta \cdot y)$.
    Applying the differential operator $A$ to that wave function yields
    \begin{align}\label{eq:pde_on_exponential}
      A  \bullet (\delta_\eta( Lx) \cdot u) \, =  \,
      A \bullet ( {\rm exp}(i\eta Lx) \cdot u) \,=\,i^d\,
      {\rm exp}(i\eta L x) 
      \cdot A(\eta L) u.
    \end{align}
    This vector of length $\ell$ is zero for all $\eta \in \RR^{n-r}$ if and only if
    $(u, \pi)$ is a wave pair. Since the space spanned by the exponential functions
    $\delta_\eta$ for $\eta \in \RR^{n-r}$ is dense in the space of all 
    distributions, by Lemma \ref{lem:expdense}, the first assertion follows.
    
    The second statement
	follows from the fact that $A\cdot (\delta_\eta(Lx)\cdot u)=0$ for all $\eta$ if and only if $(u,\pi)$ is a wave pair,
	together with the simple observation that $\delta_\eta \in \mathcal{D}'(\RR^{n-r},\CC)$.
\end{proof}

We seek wave pairs $(u,\pi)$ where $r$ is as small as possible.
Indeed, if $r$ is small then we can build distributional solutions
with small support.  What follows is the standard construction.

\begin{remark}
Let $\delta $ be the Dirac delta distribution at the origin in $\RR^{n-r}$.
The distribution~$\phi$ in (\ref{eq:wavesolution}) is supported
on the $r$-dimensional  subspace $\pi^\perp$ of $\RR^n$.
If $(u,\pi)$ is a wave pair then $\phi$ satisfies the PDE $A$.
Such {\em $A$-free measures} are important in the calculus
of variations~\cite{ADHR19, DPR}.
\end{remark}

\begin{example}[$n=3, k=4, r=2$] \label{ex:zwei}
Fix the matrix $A$ in Example \ref{ex:eins}.
For all $\xi \in \CC^3 \backslash \{0\}$, 
the linear space $\,{\rm ker} \,A(\xi)$ has dimension $2$. It
consists of all vectors $u \in \CC^4$ such that
\begin{equation}
\label{eq:xiu}
\begin{bmatrix}
\xi_1 & \xi_2 & \xi_3 & 0 \\
0 & \xi_1 & \xi_2 & \xi_3  \end{bmatrix}
\begin{bmatrix} u_1 \\ u_2 \\ u_3 \\ u_4 \end{bmatrix} \,\, = \,\,
\begin{bmatrix}
u_1 & u_2  & u_3 \\ u_2 & u_3 & u_4 \end{bmatrix}
 \begin{bmatrix} \xi_1 \\ \xi_2 \\ \xi_3 \end{bmatrix} \,\, = \,\,
 \begin{bmatrix} 0 \\ 0 \end{bmatrix}. 
\end{equation}
This equation characterizes simple waves $\phi_{\xi,u}$ that satisfy $A$.
With $r=2$  in (\ref{eq:wavesolution}) we can take
\begin{equation}
\label{eq:L_1}
\qquad L_1(x) \,\,=\,\, (u_3^2-u_2 u_4)x_1 + 
 (u_1u_4-u_2 u_3) x_2
 + (u_2^2-u_1 u_3) x_3 
 \qquad \hbox{for any $u \in \RR^4$.} 
\end{equation}
 By superposition we obtain waves with $r=1$.
 Here $u $ must be chosen such that
 the three coefficients in (\ref{eq:L_1}) vanish.
 This means that $u$ lies in  the cone over
the {\em twisted cubic curve}:
\begin{equation}
\label{eq:twistedcubic}
 (u_1,u_2,u_3,u_4) \,\, = \,\,
(s^3,s^2t, st^2,t^3).
\end{equation}
This is the wave variety $\mathcal{W}^1_A  \subset \PP^3$ in
Example \ref{ex:dadazwei}.
We obtain wave pairs $(u,\pi)$ with ${\rm codim}(\pi) = 2$, 
and thus solutions supported on a plane in $\RR^4$, by taking
 the two linear forms
$$ L_1(x) \,=\, t x_1 - s x_2 \quad {\rm and} \quad
     L_2(x) \,=\, t x_2 - s x_3. $$
    Indeed, 
          $\phi(x) = \delta \bigl(L_1(x),L_2(x) \bigr) \cdot u$ is a wave solution of $A$,
     for any
$\delta \in \mathcal{D}'(\RR^2,\CC)$.
\hfill
$ \triangle $
\end{example}

\section{Syzygies}
\label{sec3}

Our task is to solve the PDE $A\bullet \phi = 0$.
We now focus on solutions that are represented by
a vector potential $\psi$ as in Example~\ref{ex:eins}.
This section is independent from the rest of the paper.
It offers tools for constructing compactly supported solutions
with desirable properties, including those that
are waves in the interior of their support.
From an analytic point of view, the existence of compactly supported solutions is of interest in the context of understanding the space of Young measures; see e.g.~\cite{KR1}.
We note that,
while the functions in $\mathcal{D}$ 
are not waves, we can fuse them with waves to create
solutions that are of interest in fields such as convex integration \cite{KMS, Pompe}.
The choice of  the potential $\psi$ allows for~this.

Let $\mathcal{F}$ be a space of functions or distributions
$\RR^n \rightarrow \CC$ such as those in
(\ref{eq:tempered}), or those in \cite{O, oberst_book, shankar99, shankar_notes}.
We assume that $\mathcal{F}$ is an $R$-module under differentiation.
Solving the PDE means describing all $k$-tuples $\phi \in \mathcal{F}^k$  with
 $A\bullet\phi = 0$. 
The set of  all solutions is the $R$-module
\begin{align*}
  \Sol_{\mathcal{F}}(A) \,:= \, \Ker_{\mathcal{F}}(A) \,\,\subset \,\, \mathcal{F}^k.
\end{align*}
Algebraic algorithms for this task, along with implementations in {\tt Macaulay2},
are presented in \cite{AHS21, NoetherianOperators_package}.
We note that $\phi \in \Sol_{\mathcal{F}}(A)$ if and only if $m \bullet \phi = 0$ for all $m \in M $,
where $M$ is the submodule of $R^k$ spanned by the rows of $A$.
Thus the solution space depends only on the
rowspan of $A$.
More formally, we apply
the functor $\Hom_R(\,\, \cdot \,\, , \mathcal{F})$ to the exact sequence
\begin{align*}
  R^\ell \,\xrightarrow{A^T} \,R^k \,\rightarrow\, R^k/M \,\rightarrow \, 0.
\end{align*}
The result of this dualization step is the sequence
$
  \mathcal{F}^\ell \,\xleftarrow{A} \,\mathcal{F}^k \,\leftarrow 
  \, \Hom_R(R^k/M, \mathcal{F})\, \leftarrow\, 0$.
This sequence is also exact, so our solution space can be written as
an $R$-module as follows:
\begin{align}\label{eq:sol_cong_hom}
  \Sol_{\mathcal{F}}(A) \,=\,
  \Sol_{\mathcal{F}}(M) \,\cong \,\Hom_R(R^k/M, \mathcal{F}).
\end{align}
Note that $\Sol_{\mathcal{F}}(\,\, \cdot \,\,)$ is inclusion reversing: if $M_0 \subseteq M_1$ are submodules of $R^k$, then $\Sol_{\mathcal{F}}(M_0) \supseteq \Sol_{\mathcal{F}}(M_1)$.
To question to what extent a module $M$ of PDE can be recovered from
its solution spaces is addressed by the Nullstellensatz in
\cite{shankar99}. Indeed, there is a considerable body of literature in
control theory on the connection between linear PDE and
commutative algebra. We here follow the expositions by
Oberst  \cite{O, oberst_book} and Shankar \cite{shankar_notes},
and the references therein.

Recall that an
$R$-module $\mathcal{F}$ is \emph{injective} if the functor $\Hom(\,\, \cdot \,\,, \mathcal{F})$ is exact.
An $R$-module $\mathcal{F}$ is an \emph{injective cogenerator} (cf.~\cite{O}) if the following holds:
  a sequence of $R$-modules
  $A \to B \to C$ is exact if and only if $\Hom_R(A, \mathcal{F}) \leftarrow \Hom_R(B, \mathcal{F}) \leftarrow 
  \Hom_R(C,\mathcal{F})$ is exact.
The following result concerns the space $\mathcal{D}'$
of distributions. It fails for the subspace $\mathcal{D}$ of $\mathcal{D}'$.

\begin{proposition}[{\cite[Corollary 4.36]{O}}]\label{thm:inj}
  The $R$-module $\mathcal{D}'$ is an injective cogenerator, so
    the inclusion reversing map from
      submodules $M \subseteq R^k$ to
        solution spaces $\Sol_{\mathcal{D}'}(M)$ is bijective.
\end{proposition}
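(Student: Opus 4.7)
The plan is to verify the two defining properties of an injective cogenerator separately: first, that $\mathcal{D}'$ is injective as an $R$-module, and second, the cogenerator property that $\Hom_R(M,\mathcal{D}') \neq 0$ for every nonzero finitely generated $R$-module $M$. Bijectivity of the correspondence then follows formally: if $M_1 \subsetneq M_2 \subseteq R^k$, pick an associated prime $\mathfrak{p}$ of $M_2/M_1$, produce a nonzero homomorphism $M_2/M_1 \to \mathcal{D}'$ by the cogenerator property, and lift it via injectivity and \eqref{eq:sol_cong_hom} to an element of $\Sol_{\mathcal{D}'}(M_1) \setminus \Sol_{\mathcal{D}'}(M_2)$.

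For injectivity I would apply Baer's criterion: it suffices to show that every $R$-linear map $f \colon I \to \mathcal{D}'$, for an ideal $I \subseteq R$, extends to $R \to \mathcal{D}'$. For a principal ideal $I = (P)$ with $P \neq 0$, such a map is determined by $\phi := f(P) \in \mathcal{D}'$, and extending to $R$ is equivalent to solving $P(\p)\psi = \phi$ in $\mathcal{D}'$. This is precisely surjectivity of $P(\p) \colon \mathcal{D}' \to \mathcal{D}'$, which follows from the classical Malgrange--Ehrenpreis theorem: every nonzero constant-coefficient operator admits a fundamental solution $E$ with $P(\p)E = \delta_0$, and then $\psi = E \ast \phi$ works after a suitable truncation of $\phi$ using the convexity of $\Omega$. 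For general ideals one proceeds by induction on the number of generators via compatibility (syzygy) relations; the conceptual route is the Palamodov--Ehrenpreis fundamental principle, which represents each distributional solution of a PDE system as an integral of exponential-polynomial solutions over the characteristic variety, and thereby permits the extension of homomorphisms.

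The cogenerator property is then straightforward via Hilbert's Nullstellensatz. Given a nonzero finitely generated $R$-module $M$, choose $\mathfrak{p} \in \Ass(M)$, giving an embedding $R/\mathfrak{p} \hookrightarrow M$. By the Nullstellensatz, $V(\mathfrak{p}) \subset \CC^n$ is nonempty; pick $\xi \in V(\mathfrak{p})$. The exponential $x \mapsto \exp(\xi \cdot x)$ is locally integrable on $\RR^n$, hence defines an element of $\mathcal{D}'(\RR^n,\CC)$, and satisfies $P(\p)\exp(\xi \cdot x) = P(\xi)\exp(\xi \cdot x) = 0$ for every $P \in \mathfrak{p}$. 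So $1 \mapsto \exp(\xi \cdot x)$ yields a nonzero homomorphism $R/\mathfrak{p} \to \mathcal{D}'$, which extends to $M \to \mathcal{D}'$ using the injectivity already established.

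The main obstacle is injectivity, and specifically the surjectivity of $P(\p)$ on $\mathcal{D}'$. This is the content of the Malgrange--Ehrenpreis theorem for principal ideals, and of the deeper Palamodov--Ehrenpreis fundamental principle for general ideals; both rely on Hörmander's division theorem for polynomials and on the convexity of $\Omega$. Once these analytic inputs are granted, the remaining algebraic steps---Baer's criterion, the Nullstellensatz, and the passage from cyclic modules $R/\mathfrak{p}$ to arbitrary finitely generated $M$ via associated primes---are routine, and it is precisely the absence of the Malgrange--Ehrenpreis surjectivity for $\mathcal{D}$ in place of $\mathcal{D}'$ that accounts for the failure noted after the proposition.
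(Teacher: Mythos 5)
The paper does not prove this proposition: it cites \cite[Corollary 4.36]{O}, where Oberst derives the injective cogenerator property from the Ehrenpreis--Palamodov fundamental principle. Your outline is a correct reconstruction of the classical argument lying behind that citation, and it is organized the same way: establish injectivity and the cogenerator property separately, then deduce the bijectivity of $M \mapsto \Sol_{\mathcal{D}'}(M)$ by applying $\Hom_R(\,\cdot\,,\mathcal{D}')$ to $0 \to M_2/M_1 \to R^k/M_1 \to R^k/M_2 \to 0$ and using that the resulting sequence is short exact with nonzero cokernel $\Hom_R(M_2/M_1,\mathcal{D}')$. Your Nullstellensatz argument for the cogenerator property is fine: the map $R \to \mathcal{D}'$, $P \mapsto P(\partial)\exp(\xi\cdot x) = P(\xi)\exp(\xi\cdot x)$, factors through $R/\mathfrak{p}$ when $\xi \in V(\mathfrak{p})$ and is nonzero since it sends $\bar 1 \mapsto \exp(\xi\cdot x)$, and injectivity then extends it along $R/\mathfrak{p} \hookrightarrow M$.

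Two caveats worth flagging. First, ``$\psi = E \ast \phi$ after a suitable truncation'' compresses a genuine exhaustion/Mittag--Leffler argument: a distribution $\phi \in \mathcal{D}'(\Omega)$ is not compactly supported, so one solves $P(\partial)\psi_j = \phi$ on an increasing chain of convex compacts and corrects by $P$-null solutions; convexity of $\Omega$ enters exactly there (and again, more seriously, in the passage from one operator to overdetermined systems). Second, the failure for $\mathcal{D}$ in place of $\mathcal{D}'$ noted after the proposition is actually twofold, so your closing sentence understates it: $\mathcal{D}$ is not injective because $P(\partial)\colon\mathcal{D}\to\mathcal{D}$ is not surjective, as you say, but $\mathcal{D}$ is also not a cogenerator, since the exponential solutions furnishing nonzero maps $R/\mathfrak{m} \to \mathcal{D}'$ are never compactly supported, so $\Hom_R(R/\mathfrak{m},\mathcal{D}) = 0$ for every maximal ideal $\mathfrak{m}$. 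Both halves of the argument collapse over $\mathcal{D}$.
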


Our goal is to compute the subspace of solutions to $A$ that
are derived from vector potentials as in (\ref{eq:psi}).
This is usually a proper subspace, as seen in the following simple~example.

\begin{example}[$k=\ell=n=d=2$] \label{ex:222}
Let $M$ be the $R$-module generated by the rows of
  \begin{align*}
    A \,=\, \begin{bmatrix}
      \partial_1^2 & \partial_1 \partial_2 \\
      \partial_1 \partial_2 & \partial_2^2
    \end{bmatrix}.
  \end{align*}
The solutions come in two flavors, corresponding to a primary decomposition
$M = M_0 \,\cap \,M_1$. Namely, ${\rm Sol}_\mathcal{F}(M) =
{\rm Sol}_\mathcal{F}(M_0) + {\rm Sol}_\mathcal{F}(M_1)$, where
  $M_i$ is the module generated by the rows~of
  $$
    A_0 \,=\,  \begin{bmatrix}
      \partial_1 & \partial_2
    \end{bmatrix} \quad {\rm and} \quad
    A_1 \,=\, \begin{bmatrix}
      \partial_1^2 & \partial_1\partial_2 & \partial_2^2 & 0 & 0 & 0\\
      0 & 0 & 0 & \partial_1^2 & \partial_1\partial_2 & \partial_2^2
    \end{bmatrix}^T.
    $$
The solutions to the PDE $A_0$ are
     $\begin{bmatrix*}[r] - \partial_2 \bullet \psi\\ \partial_1 \bullet \psi\end{bmatrix*}$
     for any $\psi \in \mathcal{F}$, while the solutions to  $A_1$ are 
      $\begin{bmatrix} ax_1+bx_2+c \\ a' x_1 + b'x_2 + c' \end{bmatrix}$ for
       $a,a',b,b',c,c' \in \CC$.
Both the minimal prime $\{0\}$ and the embedded prime $\langle \partial_1,\partial_2 \rangle$
have multiplicity one in $M$. We can assume
 $b=c=a'=b'=c'=0$, after a suitable choice of $\psi$. This is seen from the output of
     {\tt solvePDE} in {\tt Macaulay2} \cite{AHS21, M2, NoetherianOperators_package}.
   \hfill
$ \triangle $
\end{example}

To achieve our goal for an arbitrary matrix $A \in R^{\ell \times k}$, we compute
a matrix $B$ such that 
\begin{align}\label{eq:syz}
  R^{k'} \xrightarrow{B} R^k \xrightarrow{A} R^\ell
\end{align}
is an exact sequence.
The columns of $B$ are syzygies of $A$.
The transpose of this sequence~is 
\begin{align}\label{eq:syz_T}
  R^{k'} \xleftarrow{B^T} R^k \xleftarrow{A^T} R^\ell.
\end{align}
This is a complex but it is generally not exact.
Applying $\Hom_R(\,\, \cdot \,\,, \mathcal{F})$, we get the complex
\begin{align}\label{eq:hom_syz}
  \mathcal{F}^{k'} \xrightarrow{B} \mathcal{F}^{k} \xrightarrow{A} \mathcal{F}^\ell,
\end{align}
and hence $\im_\mathcal{F} (B) \subseteq \Sol_\mathcal{F}(A)$.
This means that $B \bullet \psi$ is a solution to our PDE  $A$
for any $\psi \in \mathcal{F}$.
If the equality  $\im_\mathcal{F} (B) = \Sol_\mathcal{F}(A)$
holds then we say that $A$
{\em admits a vector potential}.
This was the case in Example \ref{ex:eins}
but not in Example \ref{ex:222}, where (\ref{eq:syz_T}) is not exact.

We briefly recall some definitions.
An element $f$ in an $R$-module $U$
is a \emph{torsion element} if 
$rf = 0$ for some $r \in R \backslash \{0\}$.
The \emph{torsion submodule} of $U$ is the module of torsion elements.
The module $U$ is \emph{torsion} if it is equal to its torsion submodule.
The module $U$ is \emph{torsion-free} if its torsion submodule is zero.
A prime ideal $P \subset R$ is 
\emph{associated} to $U$ if there is some element $u \in U$
that $P := \{r \in R \colon ru =0 \}$.
The set of associated primes of $U$ is denoted $\Ass(U)$.
The module  $U$ is called \emph{$P$-primary} if $\Ass(U) = \{P\}$.
By standard abuse of language, we use these adjectives for 
a submodule $M \subset R^k$ when
$U = R^k/M$ has that~property.

\begin{theorem} \label{thm:torsion_free}
  Suppose that the sequence \eqref{eq:syz} is exact. Then the following are equivalent:
  \begin{itemize}
  \item[(1)] The PDE $A$ admits a vector potential, i.e.~the sequence (\ref{eq:hom_syz}) is exact.
  \item[(2)] The sequence \eqref{eq:syz_T} is exact.
  \item[(3)] The module $M = \im_R(A^T)$ is torsion-free.
  \item[(4)] The module $M = \im_R(A^T)$ is $\{0\}$-primary.
  \end{itemize}
\end{theorem}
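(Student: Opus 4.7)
The plan is to dispatch the two formal equivalences $(3)\Leftrightarrow(4)$ and $(1)\Leftrightarrow(2)$ first, then tackle the substantive equivalence $(2)\Leftrightarrow(3)$.

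The equivalence $(3)\Leftrightarrow(4)$ comes from unpacking definitions: over the Noetherian domain $R$ the set of zero-divisors on a finitely generated module equals the union of its associated primes, so $R^k/M$ is torsion-free iff $\Ass(R^k/M)=\{(0)\}$, which is exactly $\{0\}$-primarity. (The homogeneity of $A$ prevents $R^k/M$ from being zero.) For $(1)\Leftrightarrow(2)$, I will apply $\Hom_R(-,\mathcal{D}')$ to \eqref{eq:syz_T}: under $\Hom_R(R^a,\mathcal{D}')\cong(\mathcal{D}')^a$, precomposition with the transpose of a matrix corresponds to the action $\bullet$ (just as in the derivation of \eqref{eq:sol_cong_hom}), so the Hom-dual of \eqref{eq:syz_T} is precisely \eqref{eq:hom_syz}. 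The injective cogenerator property of Proposition~\ref{thm:inj} then immediately delivers the equivalence.

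For $(2)\Leftrightarrow(3)$, the relation $AB=0$ gives the automatic inclusion $M=\im(A^T)\subseteq\ker(B^T)$, so (2) reduces to the reverse inclusion. The direction $(2)\Rightarrow(3)$ will be short: if $rv\in M=\ker(B^T)$ with $r\ne 0$, then $r\cdot B^T(v)=0$ in the torsion-free free module $R^{k'}$, forcing $B^T(v)=0$, so $v\in\ker(B^T)=M$.

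The main obstacle is the converse $(3)\Rightarrow(2)$. My idea is to localize \eqref{eq:syz} at the generic point by tensoring with $K=\operatorname{Frac}(R)$. Since localization is exact, $K^{k'}\xrightarrow{B}K^k\xrightarrow{A}K^\ell$ remains exact, and over the field $K$ the standard orthogonal-complement duality of finite-dimensional vector spaces yields $\im_K(A^T)=(\ker_K A)^\perp=(\im_K B)^\perp=\ker_K(B^T)$. Thus any $v\in\ker_R(B^T)$ already sits in $M\otimes_R K$, meaning $rv\in M$ for some nonzero $r\in R$; torsion-freeness from (3) then forces $v\in M$, completing $\ker(B^T)\subseteq M$.
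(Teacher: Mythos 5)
Your proof is correct, and where the paper is terse you have filled in real content. For the equivalence $(1)\Leftrightarrow(2)$ you use exactly the paper's argument: transposing to get \eqref{eq:syz_T}, applying $\Hom_R(-,\mathcal{F})$ to recover \eqref{eq:hom_syz}, and invoking the injective cogenerator property of Proposition~\ref{thm:inj}. For $(2)\Leftrightarrow(3)\Leftrightarrow(4)$ the paper only says this is ``a standard result in commutative algebra'' with a pointer to \cite{shankar_notes}, whereas you give a self-contained derivation: $(3)\Leftrightarrow(4)$ by the zerodivisor--associated-prime characterization together with the observation that $R^k/M\neq 0$ (here $d\ge 1$ is what makes $M\subseteq\mathfrak{m}R^k$, with $\mathfrak{m}=(\partial_1,\dots,\partial_n)$); $(2)\Rightarrow(3)$ by torsion-freeness of the free module $R^{k'}$; and the substantive direction $(3)\Rightarrow(2)$ by tensoring \eqref{eq:syz} with $K=\operatorname{Frac}(R)$, using exactness of localization, the orthogonal-complement duality $\im_K(A^T)=(\ker_K A)^\perp=(\im_K B)^\perp=\ker_K(B^T)$ over the field $K$, clearing denominators to land some $rv\in M$, and then applying torsion-freeness of $R^k/M$. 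This localization-at-the-generic-point argument is the standard one that the paper outsources; your write-up makes the theorem's dependence on the two key inputs (injective cogenerator on the analysis side, the fraction-field dualization on the algebra side) explicit, which is exactly what a reader wanting a complete proof needs.
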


\begin{proof}
The equivalence of (1) and (2) holds because $\mathcal{F}$ is
an injective cogenerator. The equivalence of (2), (3), (4)
is a standard result in commutative algebra, also found in~\cite{shankar_notes}.
\end{proof}

If the conditions in Theorem \ref{thm:torsion_free} are met then
we have a parametrization of all solutions:
\begin{equation}
\label{eq:solutionsareimages}
\,\Sol_\mathcal{F}(A) \,\,=\,\, \im_{\mathcal{F}}(B) \,=\, \bigl\{B \bullet \psi \colon \psi \in \mathcal{F}^{k'}\bigr\}.
\end{equation}

In general, 
$\Sol_\mathcal{F}(A) $ is strictly contained in $\im_{\mathcal{F}}(B)$: not all
 solutions of $A$ are in the image of~$B$.
In that case, the operator can be split into two operators $A_0$ and $A_1$, where $A_0$ admits a vector potential and $A_1$ does not,
  in the following sense: for all $B \in R^{k \times k'}$, there 
 exists $\psi \in \mathcal{F}^{k'}$ such that $B \bullet \psi \not \in \Sol_{\mathcal{F}}(A_1)$.
This condition is equivalent to $\{0\} \not\in \Ass(M_1)$ for $M_1 = \im_R(A_1^T)$.
It is also equivalent to $R^k/M_1 $ being a torsion module.

We write $M = M_0 \cap M_1$, where $M_0$ is $\{0\}$-primary, and $\{0\} \not\in \Ass(R^k/M_1)$.
This is obtained from a primary decomposition of $M$, where 
$M_1$ is the intersection of all primary components that are not $\{0\}$-primary.
The solutions satisfy $\Sol_\mathcal{F}(M) =
 \Sol_\mathcal{F}(M_0) + \Sol_\mathcal{F}(M_1)$.
This is known in control theory~\cite{shankar_notes}
as the \emph{controllable-uncontrollable decomposition}.

\begin{theorem} \label{thm:hascompactly}
The PDE $A$ has
    compactly supported solutions if and only if $\{ 0 \} {\in} \Ass(M)$.
\end{theorem}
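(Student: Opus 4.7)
The plan rests on recognizing that $\{0\}\in\Ass(R^k/M)$ is equivalent to $R^k/M$ being non-torsion as an $R$-module, which in turn is equivalent to $\ker(A)\subset R^k$ being nonzero. Since $R=\CC[\p_1,\ldots,\p_n]$ is a domain (so the zero ideal is prime), these equivalences are routine: a non-torsion element of $R^k/M$ has annihilator exactly $\{0\}$, and conversely a finitely generated torsion module is annihilated by a single nonzero $r\in R$ obtained as the product of the annihilators of its generators.

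For the direction ``$\{0\}\in\Ass(M)\Rightarrow$ compactly supported solutions exist'', I would pick a nonzero $v\in R^k$ with $Av=0$, choose a nonzero compactly supported scalar distribution $\delta\in\mathcal{D}'(\Omega,\CC)$---for instance a Dirac mass at a point of $\Omega$---and set $\phi:=v\bullet\delta$. Then $\phi$ is compactly supported because differential operators preserve supports, it satisfies $A\bullet\phi=(Av)\bullet\delta=0$, and it is nonzero because each of its components has Fourier transform proportional to $v_i(-i\zeta)\hat\delta(\zeta)$, a product of two nonzero entire functions on $\CC^n$.

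For the converse I would use the contrapositive. If $R^k/M$ is torsion, fix a single $r\in R\setminus\{0\}$ annihilating it, so that $r\cdot e_i\in M$ for every standard basis vector $e_i\in R^k$. Writing $r\cdot e_i$ as an explicit $R$-combination of the rows of $A$ and applying the identity to any $\phi\in\Sol_{\mathcal{D}'}(A)$ shows that every scalar component $\phi_i$ satisfies the scalar PDE $r(\p)\phi_i=0$. If $\phi$ is compactly supported then so is each $\phi_i$, and the Paley--Wiener--Schwartz theorem provides an entire Fourier transform $\hat\phi_i$ of exponential type on $\CC^n$ satisfying $r(-i\zeta)\hat\phi_i(\zeta)\equiv 0$. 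Since $r$ is a nonzero polynomial, its zero locus is a proper algebraic subvariety; hence $\hat\phi_i$ vanishes on a Zariski open set and, being entire, vanishes identically, so $\phi=0$.

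The key analytic ingredient in both directions is the Paley--Wiener--Schwartz theorem for scalar compactly supported distributions; once it is in hand, the main obstacle is purely bookkeeping---fixing Fourier sign conventions consistently with the $R$-action on $\mathcal{D}'$ recalled in Section~\ref{sec2}, and ensuring that $v\bullet\delta$ has support inside the convex open set $\Omega$, which we arrange by placing $\delta$ at an interior point of $\Omega$.
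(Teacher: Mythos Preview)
Your proof is correct and follows the same overall strategy as the paper: build compactly supported solutions from syzygies of $A$, and rule them out in the torsion case via Paley--Wiener--Schwartz. Two small differences are worth noting. In the forward direction the paper passes through a $\{0\}$-primary module $M_0 \supseteq M$ and takes a column of the syzygy matrix of $A_0$, whereas you go directly to a nonzero $v\in\ker_R(A)$; these are the same thing once one observes (as you do) that $\{0\}\in\Ass(R^k/M)$ is equivalent to $\operatorname{rank}_K(A)<k$, hence to $\ker_R(A)\neq 0$. In the converse the paper invokes the injective cogenerator property (Proposition~\ref{thm:inj}) to produce $f\in R^k\setminus M$ with $f^T\bullet\phi\neq 0$, while you avoid this by taking $f=e_i$ for each $i$ and concluding $\phi_i=0$ componentwise; your route is slightly more elementary and self-contained.
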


\begin{proof}
This result is contained in \cite[\S 3]{shankar_notes}. We offer a short proof.
Suppose $\{0\} \in \Ass(M)$. Then $M$ is a submodule of a $\{0\}$-primary module 
$M_0 \subseteq R^k$. Write $M_0$ as the $R$-row span of a matrix $A_0$
  and let $B_1$ be any column in its syzygy matrix $B$.
  Then for any compactly supported distribution $\psi$, the solution $B_1 \bullet \psi \in \Sol_{\mathcal{F}}(A)$ is also compactly supported.

For the converse,  suppose $\{0\} \not\in \Ass(M)$.
Let $\phi \in  \mathcal{F}^k \backslash \{0\}$ be a compactly supported solution.
By Proposition \ref{thm:inj}, there exists $f \in R^k \backslash M$ such that $f^T \bullet \phi \neq 0$.
  Since $R^k/M$ is torsion, $rf \in M$ for some nonzero $r \in R$.
         Thus $r \bullet f^T \bullet \phi = 0$.
  Taking Fourier transforms, by the Paley-Wiener-Schwartz Theorem \cite[Thm.~7.3.1]{hormander_vol_I}, we get the equation $r(\xi) \cdot f(\xi)^T \hat\phi(\xi) = 0$ of analytic functions.
  Since $r(\xi) \neq 0$, we must have $f(\xi)^T \hat\phi(\xi) = 0$, a contradiction.
\end{proof}

In conclusion, for any linear PDE $A$ as above,
 the solution space $\Sol_\mathcal{F}(A)$ decomposes into a subspace
$\Sol_\mathcal{F}(A_0) = \im_\mathcal{F}(B)$ 
which contains all compactly supported solutions, and another
subspace $\Sol_\mathcal{F}(A_1)$, with no compactly supported solutions at all.
We are interested in the former solutions, so our computational task is to 
go from $A$ to $B$ and  then to $A_1$. This amounts to two syzygy computations,
and is easily carried out in {\tt Macaulay2}. 
 By applying~$B$ to potentials
 $\psi(z)  = \delta(L_1(z),\ldots,L_{n-r}(z))\cdot e_i $, we 
 obtain interesting
  solutions to $A$,
   as in~(\ref{eq:wavesolution}).

\section{Varieties}
\label{sec4}

Our  $\ell \times k $ matrix $A$ specifies PDE constraints
of order $d$
for distributions $\phi:\RR^n \rightarrow \CC^k$.
The solution spaces (\ref{eq:sol_cong_hom}) depend
only on the $R$-module $M$ generated by the rows of~$A$.
In this section we introduce several algebraic varieties
that are naturally associated with $A$. As is customary in
algebraic geometry, we work in complex 
projective spaces rather than in real affine spaces.
Every subvariety of $\PP^{k-1}$ corresponds to a cone
in $\CC^k$, which is a complex variety defined by homogeneous equations,
and by restricting to $\RR^k$ one obtains a real cone.
Among such cones are the wave cones from
\cite{ADHR19} which motivated our study.
 We shall return to the
analytic perspective  in the next section. In what follows, however,
we stick to algebra. This means working in the
projective spaces $\PP^{k-1}$ and $\PP^{n-1}$ over the complex numbers~$\CC$.

For any point $y \in \PP^{n-1}$ we write $A(y)$ for the
complex $\ell \times k$ matrix that is obtained from $A$ by
replacing each $\partial_i$ with the coordinate $y_i$.
The matrix $A(y)$ is well-defined up to scale.
 We view it as a point in the projective space $\PP^{\ell k - 1}$. 
We write $z$ for points in $\PP^{k-1}$, and we~set
\begin{equation}
\label{eq:IA} \mathcal{I}_A \,\,\,= \,\,\,
\bigl\{\,(y,z) \in \PP^{n-1} \times \PP^{k-1}\,:\,
A(y) \cdot z = 0 \,\bigr\} .
\end{equation}
This is our algebro-geometric representation of the
relation between frequencies and amplitudes seen in  (\ref{eq:Au0}).
The projection of the incidence variety $\mathcal{I}_A$ onto the first factor equals
\begin{equation}
\label{eq:SA}
 \mathcal{S}_A \,\,\, = \,\,\, \{ \, y \in \PP^{n-1} \,: \, {\rm rank}(A(y)) \leq k-1 \,\}. 
 \end{equation}
This projective variety is the support of our PDE $A$. It depends only on the
module $M$. The notation $V(M)$ was used 
in \cite[Section 3]{AHS21} for the affine cone over $\mathcal{S}_A$.
The role of the support for simple waves was
highlighted in \cite[Lemma 3.6]{AHS21}.
A natural set of polynomials that define $\mathcal{S}_A$
set-theoretically is the
$k \times k$ minors of $A$.
However, these minors usually do not suffice to 
generate the radical ideal of $\mathcal{S}_A$.
There are two interesting extreme cases, namely
$\mathcal{S}_A = \PP^{n-1}$ and $\mathcal{S}_A = \emptyset$.
The former identifies PDE with
compactly supported solutions
 (cf.~Theorem \ref{thm:hascompactly}), while the
latter identifies PDE whose only solutions are polynomials \cite[Theorem 3.8]{AHS21}.

We next consider the projection of the incidence variety $\mathcal{I}_A$
onto the second factor $\PP^{k-1}$. The resulting projective variety is called the 
\emph{wave variety} of $A$, and we write it as follows:
  \begin{align*}
    \mathcal{W}_A \,\,\,:= \,\, \bigcup_{y \in \PP^{n-1} } \ker A(y).
  \end{align*}
The kernel in this definition is a linear subspace of $\PP^{k-1}$,
so $\mathcal{W}_A$ is a projective variety in~$\PP^{k-1}$.
This is the algebraic variant of the
{\em wave cone} considered in analysis; see
\cite[Theorem 1.1]{DPR} and surrounding references. 
We shall return to this in Section \ref{sec5} where it is denoted
  $\mathcal{W}_{A,\RR}$.
  
\begin{example}[$n= k=\ell=3,d=2$] Consider the second order PDE given by the matrix 
$$ A \,\,\, = \,\,\, \begin{bmatrix} 
\phantom{-}\p_1^2 & \phantom{-}\p_2^2 &\,\, \p_3^2 \,\,\,\\
 -\p_2^2 & \phantom{-}\p_3^2 & \,\,\p_1^2\, \,\,\\
  -\p_3^2 & -\p_1^2 &\,\, \p_2^2\,\,\, \end{bmatrix}\! . $$
   Its support $\mathcal{S}_A$ is the smooth sextic curve
  in $\PP^2$ defined by
${\rm det}(A(y)) = y_1^6+y_2^6+y_3^6+y_1^2 y_2^2 y_3^2$.
The wave variety $\mathcal{W}_A$ is the smooth cubic curve in $\PP^2$ defined by
$z_1^3 - z_2^3 + z_3^3 - z_1z_2z_3$. These two plane curves are linked by 
 their incidence curve $\mathcal{I}_A \subset \PP^2 \times \PP^2$.
 If the entries of $A$ are replaced by random quadrics in $\p_1,\p_2,\p_3$,
 then $\mathcal{W}_A$ is a singular curve of degree $12$ in $\PP^2$.
\hfill $\triangle$ \end{example}

The article \cite{ADHR19} extended the results in
\cite{DPR}  by introducing two refined notions of wave~cones.
We now recast these as algebraic varieties.
 For $r \in \{0,\ldots,n-1\}$, the  \emph{level $r$ wave variety} is
\begin{equation}
\label{eq:unionofintersections}
  \mathcal{W}^r_A \,\,\, := \bigcup_{\pi \in \operatorname{Gr}(n-r, n)} \bigcap_{
    y \in \pi } \, \ker A(y).
\end{equation}
The union is over the Grassmannian
$\operatorname{Gr}(n-r, n)$ of linear subspaces $\pi$ of
codimension $r$ in $\PP^{n-1}$.
For basics on Grassmannians and their projective embeddings see
\cite[Chapter 5]{MS}.
 For $r=n-1$, the inner intersection in (\ref{eq:unionofintersections})
goes away, the outer union is over $y \in \PP^{n-1}$,
and we obtain the wave variety $ \mathcal{W}_A$.
At the other end of the spectrum, the level $0$ wave variety
$ \mathcal{W}^0_A \,\,\, = \bigcap_{y \in \PP^{n-1} }  \ker A(y)$ is often empty.
For the in-between levels $r$, we obtain a hierarchy 
\begin{equation}
\label{eq:wavehierarchy}
  \mathcal{W}^0_A \,\subseteq\, \mathcal{W}^1_A \,
  \subseteq \dotsb \,\subseteq \,\mathcal{W}^{n-1}_A
  \, =\, \mathcal{W}_A \,\,\subseteq\,  \, \PP^{k-1}.
\end{equation}

We now define a second hierarchy in $\PP^{k-1}$
by switching the intersections and the union.
Namely,  for any integer $r \in \{1,\ldots,n\}$, we define the
     \emph{level $r$ obstruction variety}  to be 
 \begin{equation}
  \label{eq:intersectionofunions}
    \mathcal{O}^r_A \,\,\, := \bigcap_{\sigma \in \operatorname{Gr}(r, n)} \bigcup_{
    y \in \sigma }\, \ker A(y).
 \end{equation}
  This intersection is over the Grassmannian 
   of $(r-1)$-dimensional subspaces in  $\PP^{n-1}$.
  The smallest and the largest obstruction variety
  coincides with the corresponding wave variety.
    
\begin{lemma}           \label{lem:inclusion}                      
We have  the inclusions $\mathcal{W}^r_A \subseteq \mathcal{O}^{r+1}_A$   for all $r$, with 
$\mathcal{W}^0_A = \mathcal{O}^1_A$ and                                                          
$\mathcal{W}^{n-1}_A = \mathcal{O}^n_A$.
\end{lemma}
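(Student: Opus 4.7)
The plan is to prove all three assertions by unwinding the set-theoretic definitions and applying one elementary dimension count in $\CC^n$. Note first that since the entries of $A$ are homogeneous, $\ker A(y)$ depends only on the line $[y] \in \PP^{n-1}$, so every union or intersection appearing in the definitions of $\mathcal{W}^r_A$ and $\mathcal{O}^r_A$ is well-defined projectively.

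For the main inclusion $\mathcal{W}^r_A \subseteq \mathcal{O}^{r+1}_A$, I would fix $z \in \mathcal{W}^r_A$ and choose (by definition) a linear subspace $\pi \subset \CC^n$ with $\dim \pi = n-r$ such that $A(y)z = 0$ for all $y \in \pi$. To verify $z \in \mathcal{O}^{r+1}_A$, let $\sigma \in \operatorname{Gr}(r+1,n)$ be arbitrary. Since $\dim \pi + \dim \sigma = (n-r) + (r+1) = n+1 > n$, the intersection $\pi \cap \sigma$ contains a nonzero vector $y$. Then $y \in \sigma$ and $A(y)z = 0$, so $z \in \bigcup_{y \in \sigma} \ker A(y)$. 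As $\sigma$ was arbitrary, $z \in \mathcal{O}^{r+1}_A$.

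For the two boundary equalities, I would observe that both sides collapse to the same familiar object. When $r=0$, the Grassmannian $\operatorname{Gr}(n,n)$ consists only of $\CC^n$ itself, so
\[
  \mathcal{W}^0_A \,=\, \bigcap_{y \in \PP^{n-1}} \ker A(y).
\]
On the other hand, an element of $\operatorname{Gr}(1,n)$ is a single line through the origin, i.e.~a point $[y] \in \PP^{n-1}$, and the inner union over $\sigma$ reduces to the single space $\ker A(y)$. Hence $\mathcal{O}^1_A = \bigcap_{y \in \PP^{n-1}} \ker A(y)$ as well. A symmetric collapse handles $r = n-1$: then $\pi$ ranges over the lines in $\CC^n$ and the inner intersection trivializes, giving $\mathcal{W}^{n-1}_A = \bigcup_{y \in \PP^{n-1}} \ker A(y) = \mathcal{W}_A$, while for $\mathcal{O}^n_A$ the Grassmannian $\operatorname{Gr}(n,n)$ again has a unique element $\sigma = \CC^n$, giving $\mathcal{O}^n_A = \bigcup_{y \in \PP^{n-1}} \ker A(y) = \mathcal{W}_A$.

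There is no serious obstacle; the only thing to watch out for is bookkeeping between affine-linear dimensions (the convention used in $\operatorname{Gr}(n-r,n)$ and $\operatorname{Gr}(r,n)$) and the projective codimensions they induce in $\PP^{n-1}$. The key quantitative input is simply that a codimension-$r$ and a codimension-$(n-r-1)$ linear subspace of $\CC^n$ always meet in a line, which is what forces the inclusion in the generic case.
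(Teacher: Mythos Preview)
Your proof is correct and follows exactly the same approach as the paper: pick $\pi$ witnessing $z \in \mathcal{W}^r_A$, intersect it with an arbitrary $\sigma \in \Gr(r+1,n)$ using the dimension count, and then handle the two boundary cases by noting that the relevant Grassmannian degenerates to a single point. The only cosmetic difference is that the paper phrases the dimension argument projectively while you phrase it affinely in $\CC^n$, which you already flagged.
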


\begin{proof}
Fix $z \in \mathcal{W}^r_A$
and a codimension $r$ subspace $\pi $ of $ \PP^{n-1}$
such that $A(y) z = 0$ for all $y \in \pi$.
Consider any $r$-dimensional subspace $\sigma $ of $\PP^{n-1}$.
Pick a point $w$ in the intersection $\pi \cap \sigma $.
Since $A(w) z = 0$, we~have $z \in \bigcup_{y \in \sigma} {\rm ker} A(y)$, 
and hence $z \in \mathcal{O}_A^{r+1}$.
Equality holds for $r = 0$ because $\mathcal{W}^0_A = \bigcap_{y \in \PP^{n-1}}
{\rm ker} A(y) = \mathcal{O}^1_A$, and for $r = n-1$ because
 $\mathcal{W}^{n-1}_A =
 \bigcup_{y \in \PP^{n-1}} {\rm ker} A(y) = \mathcal{O}^n_A$.
\end{proof}

In analogy to the wave varieties in (\ref{eq:wavehierarchy}),
there is also a   hierarchy of obstruction varieties:
\begin{equation}
\label{eq:obstructionhierarchy}
 \mathcal{W}^0_A \,= \,
  \mathcal{O}^1_A \,\subseteq\, \mathcal{O}^2_A \,\subseteq \dotsb \,\subseteq \,\mathcal{O}^n_A
  \, =\, \mathcal{W}_A \,\,\subseteq\,  \, \PP^{k-1}.
\end{equation}
    
\begin{example}[$n=3, k=4, r=2$] \label{ex:dadazwei}
Fix the matrix $A$ in Example \ref{ex:eins} and \ref{ex:zwei}.
For every $z \in \PP^3$, there exists $y \in \PP^2$  with $A(y) z = 0$, and hence
$ \mathcal{W}^2_A = \mathcal{O}^3_A = \PP^3$.
But, for every $z $, there also exists
$y \in \PP^2$ with $A(y) z \not=0 $, and hence
$ \mathcal{W}^0_A = \mathcal{O}^1_A = \emptyset$.
The variety in the middle of 
(\ref{eq:wavehierarchy}) and (\ref{eq:obstructionhierarchy}) satisfies
$\mathcal{W}^1_A = \mathcal{O}^2_A \subset \PP^3$. This
is the twisted cubic~curve 
$z = (s^3,s^2t,st^2,t^3)$.
Indeed, the  matrix
 $\binom{\,z_1 \,\, z_2  \,\, z_3 \,}{\, z_2  \,\,z_3 \,\, z_4 \,}$
 has rank $1$, with kernel
$\pi = \{ y \in \PP^2 \,: \, s^2 y_1 + st y_2 + t^2 y_3 = 0 \} \in {\rm Gr}(2,3)$.
 Every other line $\sigma \in {\rm Gr}(2,3)$ 
 in the projective plane $\PP^2$
 intersects the line $\pi$.
 \hfill $\triangle$
\end{example}

We next recall a basic construction from algebraic geometry; see
\cite[Example 6.19]{Harris}. Fix~a projective variety
  $X \subset \PP^{n-1}$.
The {\em Fano variety}
${\rm Fano}_r(X) $ is the subvariety of the Grassmannian
$ {\rm Gr}(n-r,n)$
whose points are the linear spaces $\pi$ of codimension $r$ in
$\PP^{n-1}$ that lie in $X$.
We use Fano varieties to argue that the inclusion 
in Lemma   \ref{lem:inclusion}  can be strict.

\begin{example}[$k=\ell=1, n \geq 3$]
A subvariety of $\PP^0$ is either empty or a point.
Let $A = [a]$ where $a$ is irreducible of degree $d$.
Then ${\rm Fano}_1(X) = \emptyset$. 
Our varieties in (\ref{eq:unionofintersections}) and
(\ref{eq:intersectionofunions})~are
$$ 
\mathcal{W}^r_A \, = \, \begin{cases}
\,\emptyset & {\rm if}\,\, {\rm Fano}_r(X) = \emptyset , \\
\, \PP^0 & {\rm if}\,\,   {\rm Fano}_r(X) \not= \emptyset,
\end{cases}
\qquad {\rm and} \qquad
\mathcal{O}^{r+1}_A \, = \, \begin{cases}
\,\emptyset & {\rm if}\,\, r=0, \\
\, \PP^0 & {\rm if}\,\, r \geq 1.
\end{cases}
 $$
 If $d \geq 2$ then ${\rm Fano}_1(X) = \emptyset$, so
     $\mathcal{W}^1_A$ is strictly contained in $\mathcal{O}^2_A$.
Equality holds for $d=1$.  \hfill $\triangle$
\end{example}

Returning to arbitrary $k$ and $\ell$, we now show
that equality always holds for first order PDE.
The main point for  $d=1$ is this: we can write
the product $\,A(y) z \,$ as $\, C(z) y \,$
where $C(z)$ is an $\ell \times n$-matrix whose
entries are linear forms in $z_1,\ldots,z_k$.
We did this in (\ref{eq:xiu}).

\begin{proposition} \label{prop:Cu} If $d=1$ then
$\mathcal{W}^r_A = \mathcal{O}^{r+1}_A = 
\bigl\{ z \in \PP^{k-1} \,: \, {\rm rank}(C(z)) \leq r \bigr\}\,$
  for all $r$.
\end{proposition}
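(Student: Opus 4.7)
The plan is to reduce both varieties to a single rank condition on the matrix $C(z)$ by exploiting bilinearity. Since every entry $a_{ij}(y) = \sum_{s=1}^n c_{ijs}\, y_s$ is linear in $y$, the expression $(A(y)z)_i = \sum_{j,s} c_{ijs}\, y_s z_j$ is bilinear in $(y,z)$. Collecting the coefficient of $y_s$ gives an $\ell \times n$ matrix $C(z)$ with entries $C(z)_{is} = \sum_j c_{ijs}\, z_j$ linear in $z$, and by construction
$$
A(y) z \,=\, C(z) y \qquad \text{for all } y \in \CC^n,\, z \in \CC^k.
$$
In particular, $A(y) z = 0$ if and only if $y \in \ker C(z)$, so the $y$-kernel of the simple wave equation (\ref{eq:Au0}) is exactly $\ker C(z)$.

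Next I would rewrite the two varieties in terms of $\ker C(z) \subseteq \CC^n$. A point $z$ lies in $\mathcal{W}^r_A$ precisely when there is an $(n-r)$-dimensional subspace $\pi \subseteq \CC^n$ with $\pi \subseteq \ker C(z)$, which happens if and only if $\dim \ker C(z) \geq n - r$, i.e.\ $\operatorname{rank}(C(z)) \leq r$. Similarly $z \in \mathcal{O}^{r+1}_A$ means every $(r{+}1)$-dimensional subspace $\sigma \subseteq \CC^n$ meets $\ker C(z)$ nontrivially.

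The final step is the elementary dimension count: two subspaces of $\CC^n$ of dimensions $r+1$ and $s = \dim \ker C(z)$ are forced to intersect nontrivially for every choice of the first subspace if and only if $s + (r+1) > n$, i.e.\ $s \geq n - r$. Hence the obstruction condition also translates to $\operatorname{rank}(C(z)) \leq r$, giving
$$
\mathcal{W}^r_A \,=\, \bigl\{ z \in \PP^{k-1} : \operatorname{rank} C(z) \leq r \bigr\} \,=\, \mathcal{O}^{r+1}_A.
$$
The chain $\mathcal{W}^r_A \subseteq \mathcal{O}^{r+1}_A$ from Lemma~\ref{lem:inclusion} is then recovered automatically, with equality in the $d=1$ case coming from the fact that a subspace of the right dimension contained in a given subspace exists exactly when the ambient subspace has enough room---the same numerical condition that governs the forced-intersection property. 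No step looks difficult; the main thing to handle carefully is the bookkeeping between projective codimension in $\PP^{n-1}$ and linear dimension in $\CC^n$, and the observation that the ``$\supseteq$'' direction for $\mathcal{W}^r_A$ works because any subspace of $\ker C(z)$ of dimension exactly $n-r$ serves as a valid $\pi$.
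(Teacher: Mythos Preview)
Your proposal is correct and follows essentially the same approach as the paper: both reduce the conditions $z \in \mathcal{W}^r_A$ and $z \in \mathcal{O}^{r+1}_A$ to the single rank condition $\operatorname{rank}(C(z)) \leq r$ via the identity $A(y)z = C(z)y$, using that $\ker C(z)$ contains an $(n-r)$-plane iff it meets every $(r{+}1)$-plane iff its dimension is at least $n-r$. Your write-up is simply more explicit about the bilinear bookkeeping and the dimension count than the paper's terse version.
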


\begin{proof}
Fix $z \in \PP^{k-1}$. The condition 
$z \in \mathcal{W}^r_A$ says that the
kernel of the matrix $C(z)$~contains a subspace $\pi$ of codimension $r$.
The condition 
$z \in \mathcal{O}^{r+1}_A$ says that the kernel of $C(z)$ 
meets every $r$-dimensional subspace $\sigma$ of $\PP^{n-1}$.
Both conditions are equivalent to
${\rm rank}(C(z)) \leq r$.
\end{proof}

Thus, the wave varieties of first order PDE
are easy to write down: they are 
the determinantal varieties of the auxiliary matrix $C(z)$.
For $d \geq 2$, elimination methods from nonlinear algebra (e.g.~Gr\"obner bases)
are needed to compute the defining equations of these varieties.

\begin{proposition} \label{prop:twocones}
The wave varieties $\mathcal{W}^r_A$ and the obstruction variety $\mathcal{O}^r_A$
are indeed varieties in
the projective space $\PP^{k-1}$, i.e.~they are zero
sets of homogeneous polynomials in $k$ variables.
\end{proposition}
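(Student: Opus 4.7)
The plan is to realize each of $\mathcal{W}^r_A$ and $\mathcal{O}^r_A$ as the image, or intersection of images, of closed incidence varieties under proper projections. Homogeneity in the $z$-variables is automatic throughout since $A(y)z$ is linear in $z$, so each intermediate condition carves out a cone in $\CC^k$.

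For $\mathcal{W}^r_A$, I would introduce the incidence variety
\[
\mathcal{J}^r_A \,=\, \bigl\{\,(\pi,z)\in \operatorname{Gr}(n-r,n)\times \PP^{k-1}\,:\, A(y)z=0 \text{ for all } y\in\pi\,\bigr\}.
\]
In an affine chart of the Grassmannian I represent $\pi$ by an $(n-r)\times n$ matrix $M$ whose entries are polynomial in the local coordinates. The membership condition then reads ``$A(M^T t)z=0$ identically in $t\in\CC^{n-r}$''; since $A$ is homogeneous of degree $d$ in its argument, this is the simultaneous vanishing of the coefficients of a polynomial in $t$, whose coefficients are themselves polynomials in $(M,z)$ that are homogeneous of degree $1$ in $z$. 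Gluing across the standard charts of $\operatorname{Gr}(n-r,n)$ shows that $\mathcal{J}^r_A$ is Zariski closed, cut out by equations homogeneous in~$z$. Then $\mathcal{W}^r_A = \operatorname{pr}_{\PP^{k-1}}(\mathcal{J}^r_A)$, and because $\operatorname{Gr}(n-r,n)$ is projective this projection is closed; the image is a union of cones in $\CC^k$ and hence a projective subvariety of $\PP^{k-1}$.

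For $\mathcal{O}^r_A$ I would first argue, for each single $\sigma\in\operatorname{Gr}(r,n)$, that
\[
N_\sigma \,:=\, \bigcup_{y\in\sigma}\ker A(y) \,=\, \operatorname{pr}_{\PP^{k-1}}\bigl(\{(y,z)\in \sigma\times \PP^{k-1} : A(y)z=0\}\bigr)
\]
is a projective subvariety of $\PP^{k-1}$: the set inside the projection is cut out by the bihomogeneous equations $A(y)z=0$, and the projection is closed since $\sigma\cong \PP^{r-1}$ is complete. Then $\mathcal{O}^r_A=\bigcap_\sigma N_\sigma$ is an intersection of closed cones in $\PP^{k-1}$; since arbitrary intersections of Zariski closed sets are closed, $\mathcal{O}^r_A$ is a projective subvariety of $\PP^{k-1}$ as well.

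The only real obstacle is the first step: cleanly checking that $\mathcal{J}^r_A$ is Zariski closed inside $\operatorname{Gr}(n-r,n)\times \PP^{k-1}$. The substitution argument in local affine charts makes this explicit; alternatively, one can phrase $\mathcal{J}^r_A$ as the vanishing locus of the bundle map $\operatorname{Sym}^d\mathcal{S}\otimes \mathcal{O}_{\PP^{k-1}}(-1)\to \mathcal{O}^{\oplus\ell}$ induced by the tautological subbundle $\mathcal{S}\hookrightarrow \mathcal{O}_{\operatorname{Gr}}^{\oplus n}$. Once closedness of the incidence variety is in hand, the remaining content reduces to the completeness of projective varieties and the elementary fact that arbitrary intersections of closed sets are closed.
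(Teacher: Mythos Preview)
Your proposal is correct and follows essentially the same approach as the paper: build incidence varieties, use completeness of the Grassmannian (equivalently, the Main Theorem of Elimination Theory) for the existential quantifier, and take intersections of closed sets for the universal quantifier. The only organizational difference is that the paper packages everything through a single three-factor incidence variety $\mathcal{I}^r_A \subset \PP^{n-1}\times\PP^{k-1}\times\operatorname{Gr}(n-r,n)$ and then reads off both $\mathcal{W}^r_A$ and $\mathcal{O}^r_A$ as $\exists\pi\,\forall y$ and $\forall\pi\,\exists y$ projections, whereas you work with the two-factor variety $\mathcal{J}^r_A\subset\operatorname{Gr}(n-r,n)\times\PP^{k-1}$ for the wave variety and handle the obstruction variety one $\sigma$ at a time. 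Your $\mathcal{J}^r_A$ is precisely what the paper later introduces in Section~6 as the \emph{wave pair variety} $\mathcal{P}^r_A$, so you have in effect anticipated that construction; your bundle-theoretic description of its closedness is a pleasant bonus not present in the paper.
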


\begin{proof}
The following incidence variety is closed in its ambient product space:
\begin{equation}
\label{eq:IrA} \mathcal{I}^r_A \,\, = \,\,
\bigl\{ \,(y,z,\pi) \in \PP^{n-1} \times \PP^{k-1} \times {\rm Gr}(n-r,n) \,:\,
A(y) z = 0 \,\,\, {\rm and} \, \,\,y \in \pi \,\bigr\}.
\end{equation}
The sets we defined in (\ref{eq:unionofintersections}) and
(\ref{eq:intersectionofunions})
are derived from this variety by quantifier elimination:
$$
\mathcal{W}^r_A  \, = \,
\bigl\{ \,z \,: \,\exists \pi \, \,\forall \,y \,\, (y,z,\pi) \in \mathcal{I}^r_A \,\bigr\} \quad {\rm and} \quad
\mathcal{O}^r_A  \, = \,
\bigl\{ \,z \,: \,\forall \pi \, \,\exists \,y \, \,(y,z,\pi) \in \mathcal{I}^r_A \,\bigr\} . 
$$
These two sets are closed in $\PP^{k-1}$ because all 
their defining equations are homogeneous in each group of variables.
For the existential quantifier this follows from the  Main Theorem of Elimination Theory
\cite[Theorem 4.22]{MS}.
For the universal quantifier once checks it directly.

We compute ideals for $\mathcal{W}^r_A$ and $\mathcal{O}^r_A$ as follows.
The equations $A(y) z= 0$ are bihomogeneous of degree $(d,1)$.
The condition $y \in \pi$ translates into 
 bilinear equations in $(y,p)$, where $p$ is the vector of Pl\"ucker coordinates of $\pi$.
We view these as equations in 
$y$ with coefficients in $(z,p)$, and we form the ideal of all coefficient 
polynomials. The zero set of this ideal is the subvariety
$\,\bigcap_{ y \in \pi } {\rm ker} A(y)$, which lies
 in ${\rm Gr}(r,n) \times \PP^{k-1}$.
We now project that variety onto the second factor to obtain $\mathcal{W}^r_A$.
This amounts to saturating and then eliminating the Pl\"ucker coordinates $p$.
What arises is an ideal in the unknowns $z$ whose zero set is $\mathcal{W}^r_A$.

To get the ideal of $\mathcal{O}^r_A$, we modify the argument as follows.
Again, we consider a fixed but unknown Pl\"ucker vector $p$ and we consider the
equations for $y \in \pi$ along with $A(y) z = 0$. From these equations
we eliminate $y$ to obtain polynomials in $(p,z)$ whose zero set is
$\bigcup_{y \in \pi } \ker A(y)$. We now vary $p$
and we view this as a subvariety of ${\rm Gr}(r,n) \times \PP^{k-1}$.
We consider the defining equations of this subvariety, 
and we write them as polynomials in $p$ whose
coefficients are polynomials in $z$. The collection of all such coefficient
polynomials defines a subvariety of $\PP^{k-1}$. By construction, that
subvariety equals the desired set $\mathcal{O}^r_A$.
\end{proof}

\section{Back to Analysis}
\label{sec5}

We now return to the setting of waves $\phi: \RR^n \rightarrow \CC^k$ 
that was introduced in Section \ref{sec2}.
The projective varieties $\mathcal{W}^r_A$ and $\mathcal{O}^r_A$
in $ \PP^{k-1}$ are to be viewed as affine cones in $\CC^{k}$.
We write
\begin{align*}
  \mathcal{W}_{A,\RR}^r \,\,\, &\coloneqq \bigcup_{\pi \in \Gr_\RR(n-r,n)} \bigcap_{y \in \pi \setminus \{0\}} \ker A(y), \\
  \mathcal{O}_{A,\RR}^r \,\,\, &\coloneqq \,\, \,\bigcap_{\sigma \in \Gr_\RR(r,n)}
  \, \,\bigcup_{y \in \sigma \setminus \{0\}} \ker A(y),
\end{align*}
where $\Gr_\RR(r,n)$ is the Grassmannian of $r$-dimensional subspaces in $\RR^n$.
In these definitions, 
the kernel of $A(y)$ is over the complex numbers, but
$\pi$ and $\sigma$ are required to be real. Hence 
$\mathcal{W}_{A,\RR}^r$ and $\mathcal{O}_{A,\RR}^r$ are subsets in $\CC^k$,
closely related to the projective varieties in
(\ref{eq:unionofintersections})~and~(\ref{eq:intersectionofunions}).

Readers of \cite{ADHR19} will note that we changed notation and 
nomenclature. The $\ell$-wave cone $\Lambda^\ell_\mathcal{A}$ from
\cite[Definition 1.2]{ADHR19} is the obstruction cone $\mathcal{O}^r_{A,\RR}$ here,
while the cone $\mathcal{N}^\ell_\mathcal{A}$ defined later in \cite[eqn (1.8)]{ADHR19}
is our wave cone $\mathcal{W}^r_{A,\RR}$.
The coming results  will motivate these choices.

Proposition~\ref{prop:wave_sols} shows why $\mathcal{W}^r_{A,\RR}$ 
serves as the $r$th wave cone.
The distribution in (\ref{eq:wavesolution}) has the form
$\, \RR^n \rightarrow \CC^k \,: x \,\mapsto \, \delta(L x) \cdot u $
where $L$ is the $(n-r) \times n$ matrix whose rows are 
the coefficients of $L_1,\ldots,L_{n-r}$.
Recall Remark~\ref{rmk:composition} for the definition of $\delta(Lx)$ as a distribution.

\begin{proposition}\label{prop:NN}
A vector $u\in \CC^k$ lies in the wave cone $\mathcal{W}^r_{A,\RR}$ 
if and only if there is a 
matrix $L \in \RR^{(n-r) \times n}$ such that $x \mapsto \delta(L x) \cdot u$ 
is a solution to $A$ for all
distributions  $\delta \in \mathcal{D}'( \RR^{n-r} , \CC)$.
\end{proposition}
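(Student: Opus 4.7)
The plan is to recognize Proposition \ref{prop:NN} as essentially a reformulation of Proposition \ref{prop:wave_sols} in the cone-theoretic language of Section \ref{sec5}, and so to prove both implications by direct appeal to that earlier result. The dictionary is: a wave pair $(u,\pi)$ with $\dim\pi = n-r$ corresponds to a matrix $L \in \RR^{(n-r)\times n}$ whose rows span $\pi$, in which case $\ker L = \pi^\perp$ and the linear forms $L_1,\dots,L_{n-r}$ appearing in Proposition \ref{prop:wave_sols} are exactly the rows of $L$.

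For the forward direction, suppose $u\in \mathcal{W}^r_{A,\RR}$. By the definition of the real wave cone, there exists $\pi \in \Gr_\RR(n-r,n)$ with $A(y)u = 0$ for every $y\in\pi$, i.e.\ $(u,\pi)$ is a wave pair. I would then pick any full row rank matrix $L\in\RR^{(n-r)\times n}$ whose row space is $\pi$; its rows cut out $\ker L = \pi^\perp$, so Proposition \ref{prop:wave_sols} applies and yields that $x\mapsto \delta(Lx)\cdot u$ is a wave solution of $A$ for every $\delta \in \mathcal{D}'(\RR^{n-r},\CC)$.

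For the backward direction, I would use the second (converse) assertion of Proposition \ref{prop:wave_sols} directly: if $\delta(Lx)\cdot u$ solves $A$ for every $\delta$, then setting $\pi := \mathrm{rowspan}(L)$ we conclude that $(u,\pi)$ is a wave pair. An equivalent hands-on argument is to specialize to the exponentials $\delta_\eta(y) = \exp(i\eta\cdot y)$ and invoke the identity $A\bullet(\delta_\eta(Lx)\cdot u) = i^d \exp(i\eta Lx)\cdot A(\eta L)u$ from \eqref{eq:pde_on_exponential}: vanishing for all $\eta\in\RR^{n-r}$ forces $A(y)u = 0$ for every $y$ in $\mathrm{rowspan}(L)=\pi$. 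The only subtlety in either approach is the dimension count: the expression $\delta(Lx)$ is only defined for arbitrary distributions $\delta$ when $L$ has full row rank (cf.\ Remark \ref{rmk:composition}, where $(LL^T)^{-1}$ appears), so implicitly $\dim\pi = n-r$, placing $\pi$ in $\Gr_\RR(n-r,n)$ and giving $u\in \mathcal{W}^r_{A,\RR}$.

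There is no real obstacle beyond this rank bookkeeping; the substantive analytic input (density of exponentials in $\mathcal{D}'$ and the computation of $A$ on $\delta(Lx)\cdot u$) has already been carried out in the proof of Proposition \ref{prop:wave_sols}, so the present statement is a corollary.
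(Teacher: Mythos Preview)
Your proposal is correct and follows essentially the same approach as the paper: both recognize the statement as a restatement of Proposition~\ref{prop:wave_sols} via the dictionary $\pi = \mathrm{rowspan}(L)$, and both invoke that proposition directly for the two implications. Your write-up is in fact more detailed than the paper's, which dispatches the argument in three sentences; your remark about the full-rank condition on $L$ (needed for $\delta(Lx)$ to make sense per Remark~\ref{rmk:composition}) is a useful clarification the paper leaves implicit.
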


\begin{proof}
By definition, 
a complex vector $u $ lies in the wave cone $ \mathcal{W}^r_{A,\RR}$ if and only if
there exists a real subspace $\pi \in {\rm Gr}_\RR(n-r,n)$ such that
$A(\xi) u = 0$ for all $\xi \in \pi \subseteq \RR^n$.
This is equivalent to saying that $(u, \pi)$ is a wave pair for $A$.
If we identify $\pi$ with the rowspace of $L$, then the result follows from Proposition~\ref{prop:wave_sols}.
\end{proof}

We next present an analogous statement for 
the obstruction cones $\mathcal{O}^r_{A,\RR}$.

\begin{proposition}\label{prop:obstruction}  A vector $u \in \CC^k$ lies in 
$\mathcal{O}^r_{A,\RR}$ if and only if, for all $S \in \RR^{r \times n}$ of rank $r$, the
PDE $A$ has a wave solution $\,x \mapsto \delta( Sx) \cdot u$
where $\delta$ is nonconstant and bounded.
\end{proposition}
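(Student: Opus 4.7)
The plan is to parametrize each subspace $\sigma \in \Gr_\RR(r,n)$ by a full-rank matrix $S \in \RR^{r \times n}$ via $\sigma = {\rm rowspan}(S)$, so that each nonzero $y \in \sigma$ has the form $y = S^T\eta$ for some $\eta \in \RR^r \setminus \{0\}$. The bridge between the geometric condition $A(y)u = 0$ and the analytic condition is the identity $\delta_\eta(Sx)\cdot u = \exp\bigl(i(S^T\eta)\cdot x\bigr)\cdot u = \phi_{S^T\eta,u}(x)$, which by \eqref{eq:Au0} solves the PDE exactly when $A(S^T\eta)u = 0$. Using this, the forward direction is almost immediate: given $u \in \mathcal{O}^r_{A,\RR}$ and $S$ of rank $r$, set $\sigma = {\rm rowspan}(S)$, pick $y \in \sigma \setminus \{0\}$ with $A(y)u = 0$, write $y = S^T\eta$ with $\eta \neq 0$, and take $\delta(z) = \exp(i\eta \cdot z)$. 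This $\delta$ is bounded and nonconstant, and $\delta(Sx)\cdot u$ is a simple wave solving $A$.

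For the converse, fix arbitrary $\sigma$ and choose $S$ of rank $r$ with ${\rm rowspan}(S) = \sigma$; by hypothesis there is a nonconstant bounded $\delta$ with $A \bullet (\delta(Sx)\cdot u) = 0$. Being bounded, $\delta$ is tempered on $\RR^r$, and $\widehat{\delta(S\,\cdot)}$ on $\RR^n$ is the pushforward of $\hat\delta$ along $S^T$, hence supported on $\sigma$. Taking Fourier transforms of the PDE converts it into the distributional identity
$$A(S^T\eta)\,u \cdot \hat\delta(\eta) \,=\, 0 \quad \text{in } \mathcal{S}'(\RR^r,\CC^\ell),$$
where $A(S^T\eta)u$ is a polynomial vector in $\eta$. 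Since $\delta$ is nonconstant and bounded, $\hat\delta$ cannot be supported only at the origin (otherwise $\delta$ would be a polynomial, and then constant), so there exists $\eta_0 \in {\rm supp}(\hat\delta) \setminus \{0\}$. The standard fact that $p \cdot T = 0$ forces ${\rm supp}(T) \subseteq \{p = 0\}$ for any polynomial $p$ and distribution $T$ then yields $A(S^T\eta_0)u = 0$, and $y_0 = S^T\eta_0 \in \sigma \setminus \{0\}$ shows $u \in \bigcup_{y\in \sigma\setminus\{0\}} \ker A(y)$. Varying $\sigma$ gives $u \in \mathcal{O}^r_{A,\RR}$.

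The main obstacle I anticipate is the Fourier-side bookkeeping in the converse: carefully identifying $\widehat{\delta(S\,\cdot)}$ with the pushforward of $\hat\delta$ along $S^T$ (so that the support lands inside $\sigma$), and upgrading the polynomial-times-distribution identity to the claimed support statement. Both are standard facts about tempered distributions, but must be invoked cleanly; the degenerate case where $A(S^T\eta)u$ vanishes identically in $\eta$ should also be singled out, although there the conclusion is immediate since every $y \in \sigma$ already satisfies $A(y)u = 0$.
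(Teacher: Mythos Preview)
Your forward direction is essentially identical to the paper's. Your converse, however, takes a genuinely different route. The paper argues the contrapositive: if $u \notin \mathcal{O}^r_{A,\RR}$, pick $\sigma$ with $A(\xi)u \neq 0$ for all nonzero $\xi \in \sigma$, and observe that the reduced operator $\alpha(\partial_z) = A(S^T\partial_z)\cdot u$ on $\RR^r$ is then \emph{elliptic}. Elliptic regularity plus a scaling/Closed-Graph Liouville argument forces any bounded solution $\delta$ of $\alpha \bullet \delta = 0$ to be constant, contradicting the hypothesis. Your argument instead stays on the Fourier side: boundedness makes $\delta$ tempered, the PDE becomes $p_i(\eta)\hat\delta(\eta)=0$ for the polynomial entries $p_i$ of $A(S^T\eta)u$, nonconstancy forces $\operatorname{supp}\hat\delta \not\subseteq \{0\}$, and the standard inclusion $\operatorname{supp}\hat\delta \subseteq \{p_i = 0\}$ produces the desired $\eta_0 \neq 0$.

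Both are correct. Your approach is more elementary and self-contained, bypassing elliptic regularity and the Liouville machinery entirely; the paper's approach makes explicit the conceptual point that ``$u \notin \mathcal{O}^r_{A,\RR}$'' is exactly an ellipticity statement for the reduced operator, which ties the result to classical PDE theory. One small cleanup: you don't actually need the pushforward description of $\widehat{\delta(S\cdot)}$ on $\RR^n$. Since $S$ has rank $r$ it is surjective onto $\RR^r$, so $A\bullet(\delta(Sx)u)=0$ on $\RR^n$ is equivalent (via the chain rule and Remark~\ref{rmk:composition}) to $[A(S^T\partial_z)u]\bullet\delta = 0$ on $\RR^r$; take the Fourier transform there directly.
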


\begin{proof}
Suppose $u \in \mathcal{O}^r_{A,\RR}$ and let $\sigma \in {\rm Gr}_\RR(r,n)$ 
be the real rowspan of the real matrix $S$. Fix a nonzero vector $\xi \in \sigma$ such that
$A(\xi)u = 0$, and let $\eta \in  \RR^r$ such that $\xi = \eta S$.
The exponential function $\delta_\eta (t) = {\rm exp}(i \eta \cdot t)$ is
nonconstant and bounded. Moreover, the function
$\delta_\eta(Sx) \cdot u$ is a wave solution to the PDE $A$, by the same calculation as in the proof of 
Proposition \ref{prop:NN}. This proves the only-if direction.

For the if-direction, let $u \notin \mathcal{O}^r_{A,\RR}$.
There exists $\sigma \in {\rm Gr}_{\RR}(r,n)$ such that $A(\xi)\cdot u\neq 0$ for all $\xi \in \sigma \backslash \{0\}$.
Let $S$ be as before the real matrix with rowspan $\sigma$.
Now suppose $\delta(Sx)\cdot u$ is a bounded solution of $A$.
By the proof of Proposition~\ref{prop:wave_sols}, this implies that $\delta(y)$ is a bounded solution of 
the operator $\alpha(\partial_y)=A(\partial_y S) \cdot u$.
This operator is elliptic by our assumption.
 By classical theory (cf.~\cite[Theorem 2-7]{schechter77}),
 every solution to $\alpha \bullet v=f$ with $f\in C^\infty$ is in $C^\infty$.
   Therefore a Liouville theorem holds: one can use the Closed Graph Theorem to deduce that there is a constant $C>0$ such that for any solution of $\alpha \bullet v =0$ in the unit ball $B_1$ one has 
\[ \|Dv\|_{L^\infty(B_{1/2})} \,\le\, C \|v\|_{L^\infty (B_1) }\,.\] 
Since the operator $\alpha$ is of homogenous degree $d$,  we can use scaling to obtain
\[ \|Dv\|_{L^\infty (B_R)}\, \,\le\, \frac{C}{R}\|v\|_{L^\infty(B_{2R})}. \]
Hence, every bounded solution on $\RR^{n-r}$ is constant
(cf.~\cite[Chapter 2]{schechter77}). So,
 $\delta$ is constant.
\end{proof}

We used the term ``obstruction'' for the variety $\mathcal{O}^r_A$
and the cone $\mathcal{O}^r_{A,\RR}$ not because their elements are
obstructions. Rather, our choice of name refers to role
played by the cone $\mathcal{O}^r_{A,\RR}$ in the paper \cite{ADHR19}
which motivated us.
Since $\mathcal{O}^{r}_{A,\RR}$
contains the wave cone $\mathcal{W}^{r-1}_{A,\RR}$, the
latter is empty if the former is empty. Thus, the cone
$\mathcal{O}^{r}_{A,\RR}$ being empty is an obstruction to~the
existence of wave solutions.
That obstruction is a key for the ``dimensional estimates'' in~\cite{ADHR19}.

In the present paper we often transition between real numbers and
 complex numbers. This occurs at multiple mathematical levels, including
trigonometry and projective geometry.
The complex numbers represent waves in Section \ref{sec2}
and they serve as an algebraically closed field in Section \ref{sec4}.
However, the argument $x$ of our solutions $\phi(x)$ are real vectors.
The spaces (\ref{eq:tempered}) belong to the field real analysis, as does
the study of $A$-free Radon measures in \cite{ADHR19, DPR, KR}.
Recall that a Radon measure is a distribution that admits an integral
representation, and one is interested in 
rectifiability of such measures that satisfy the PDE constraint given by~$A$.

This raises the question of how complex analysis fits in.
 From a purely algebraic point of view,
we can certainly consider solutions in the space of
holomorphic functions $\phi: \CC^n \rightarrow \CC^k$. 
All our formal results extend gracefully to that setting.
For instance, we can certainly take $\delta$ in 
(\ref{eq:wavesolution}) to be a holomorphic function on $\CC^{n-r}$
to get a holomorphic solution $\phi$ to our PDE.
However, from an analytic point of view, there are no
meaningful waves in complex analysis. The following example is
meant to illustrate the importance of reality for making waves.

\begin{example}[$n=2,k=\ell=1, \, d=1,2$]
We consider PDE for scalar-valued functions in two variables.
The {\em transport equation} $A = \p_1+\p_2$ has the solutions $\delta(x_1-x_2)$.
These are waves and $\delta$ can be any distribution. The
{\em Cauchy-Riemann equation} $A' = \p_1 + i \p_2$ looks very similar,
and we can write its solutions formally as $\delta(x_1 + ix_2)$. 
But, these solutions do not come from the wave cone $\mathcal{W}^r_{A,\RR}$,
since here $\pi$ is not real, and these do not give waves.
Passing to second order equations, one might compare
$\p_1^2- \p_2^2$ and $\p_1^2+\p_2^2$. These two PDE
look indistinguishable to the eyes of algebraist, while an analyst 
will see a {\em hyperbolic PDE} and an {\em elliptic PDE}. These
two classes have vastly different properties for their solutions.
In particular, the latter can only admit smooth solutions.
\hfill $\triangle$
\end{example}

The affine cones $\mathcal{W}_{A,\RR}^r$ and $\mathcal{O}_{A,\RR}^r$ can be quite different from the
complex varieties $\mathcal{W}_A^r$ and $\mathcal{O}_A^r$.
In general we have $\mathcal{W}_A^r \supseteq \mathcal{W}_{A,\RR}^r$.
Indeed, if $z \in \mathcal{W}_A^r$, there is a linear subspace $\pi \in \Gr(n-r,n)$ such that $A(y)z = 0$ for all $y \in \pi$.
For $z \in \CC^k$ to lie in $\mathcal{W}_{A,\RR}^r$, we must impose the additional condition that the dimension of $\pi \cap \RR^n$ is also $n-r$.
The inclusions for the obstruction cones are reversed: $\mathcal{O}_A^r \subseteq \mathcal{O}_{A,\RR}^r$.
The point $z \in \CC^k$ lies in $\mathcal{O}_A^r$ if and only if for all $\sigma \in \Gr(r,n)$ there exists $y\in \sigma \setminus \{0\}$ such that $A(y)z = 0$.
This condition is relaxed in $\mathcal{O}_{A,\RR}^r$, where it suffices to consider those $\sigma$ whose real part $\sigma \cap \RR^n$ also has dimension $r$.

We close with an example that highlights the connection
to the theory of rank-one convexity in the study of nonlinear PDE 
\cite{KMS}. Here one is interested in solutions to $A$
that additionally satisfy differential inclusions $\phi(x) \in \mathcal{K}$,
where $\mathcal{K}$ is a specified subset of $\CC^k$. Of
special interest in the case when the target is a matrix space and
$\mathcal{K}$ is a finite set of matrices.

\begin{example}[$n=2, d=1, k=3,\ell = 2$] \label{ex:curl1}
Consider the action of the curl operator on the $3$-dimensional space of
symmetric $2 \times 2$-matrices 
$\binom{\,\phi_1 \,\, \phi_2   \,}{\, \phi_2  \,\,\phi_3  \,}$. In our notation, this corresponds~to
$$ A \,=\, \begin{small}
\begin{bmatrix} -\p_2  & \! \phantom{-}\p_1 & \,0\,\, \\ \,\,0  & \! -\p_2 & \,\p_1 \,\,\end{bmatrix} .
\end{small} $$
This PDE is a simplified version of that in Examples \ref{ex:eins}, \ref{ex:zwei}
and \ref{ex:dadazwei}.
The wave cone consists of symmetric $2 \times 2$ matrices of rank $1$.
Here $\mathcal{K}$ is a finite set in $\RR^3$, such as the
five matrices in \cite{Pompe}, whose rank-one convex hull is of great interest.
Our varieties in Section \ref{sec4} offer an algebraic framework
for higher notions of convexity that might be of interest in analysis.
\hfill $\triangle$
\end{example}

\section{Computing Wave Pairs}
\label{sec6}

Our aim is to solve a PDE, given by 
an $\ell \times k$ matrix $A$ whose entries are homogeneous polynomials of degree $d$
in $R = \CC[\p_1,\ldots,\p_n]$.
Each wave (\ref{eq:wavesolution}) arises from a wave pair $(z,\pi)$, which serves as 
a blueprint for creating solutions to the PDE.
Our approach allows complete freedom in making waves with desirable analytic properties,
by choosing the distribution $\delta $ in Proposition \ref{prop:NN}.
Inspired by Proposition~\ref{prop:wave_sols}, we define the {\em wave pair variety} 
$$ \mathcal{P}^r_A \,\, = \,\,
\bigl\{ (z, \pi) \in \PP^{k-1} \times \Gr(n-r, n) \,\colon A(y)z = 0 \,\,\,
\hbox{for all}\, \,\,y \in \pi \bigr\}.
$$
This is a smaller version  of the incidence variety 
$\mathcal{I}^r_A$ we saw in (\ref{eq:IrA}).
The wave variety $\mathcal{W}^r_A$ introduced in
(\ref{eq:unionofintersections})
is the projection of the wave pair variety $\mathcal{P}^r_A$ onto the first factor $\PP^{k-1}$.
For $r=n-1$ the wave pair variety coindices with the incidence variety in (\ref{eq:IA}). In symbols,
\begin{equation}
\label{eq:IPn-1}
 \mathcal{P}^{n-1}_A \,\,=\,\, \mathcal{I}_A.
 \end{equation}

It is instructive to start with the
case $k=1$. Here $\mathcal{P}^r_A$ lives in
$\PP^0 \times \Gr(n-r,n)$, which we identify with
$\Gr(n-r,n)$. Consider the subvariety $\mathcal{S}_A$ 
of $\PP^{n-1}$ that is defined by the $\ell$ entries of the $\ell \times 1$ matrix $A$.
This is the support of our PDE, as seen in  (\ref{eq:SA}).
The condition 
$A(y) z = 0$ for $z \in \PP^0$
simply means that $ y \in \mathcal{S}_A$.
From this we conclude the following fact.

\begin{corollary} \label{cor:fano}
  If $k=1$ then $\mathcal{P}^r_A = {\rm Fano}_{r}(\mathcal{S}_A)$
is the Fano variety of the support $\mathcal{S}_A$. The points of
$\mathcal{P}^r_A$ are the linear spaces of
 codimension $r$ in $\PP^{n-1}$ that are contained in~$\mathcal{S}_A$.
 \end{corollary}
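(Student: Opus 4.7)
The plan is simply to unwind the defining conditions on both sides and check that they coincide. In the special case $k=1$, the matrix $A$ is a column vector of length $\ell$ with entries $a_1,\dotsc,a_\ell \in R$, homogeneous of degree $d$, and the space $\PP^{k-1}=\PP^0$ is a single point. A wave pair $(z,\pi) \in \PP^0 \times \mathrm{Gr}(n-r,n)$ is therefore determined by $\pi$ alone, so $\mathcal{P}^r_A$ is canonically identified with a subvariety of $\mathrm{Gr}(n-r,n)$.

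Next, I would translate the condition $A(y)z=0$ for all $y\in\pi$. Fixing the representative $z=1$ of the unique point of $\PP^0$, the relation $A(y)\cdot z = 0$ collapses to the vector equation $A(y)=0$ in $\CC^\ell$, which means $a_1(y)=\cdots=a_\ell(y)=0$. Hence $\pi \in \mathcal{P}^r_A$ if and only if every $y\in\pi$ is a common zero of $a_1,\dotsc,a_\ell$.

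The final step is to match this condition with membership in the support variety $\mathcal{S}_A$. By definition (\ref{eq:SA}), the support is cut out set-theoretically by the $k\times k = 1 \times 1$ minors of $A$, which are exactly the polynomials $a_1,\dotsc,a_\ell$. Consequently $y \in \mathcal{S}_A$ is equivalent to $A(y)=0$, so $\pi \in \mathcal{P}^r_A$ iff $\pi \subseteq \mathcal{S}_A$. By the definition of Fano varieties recalled just before the corollary, this is precisely the condition that $\pi \in \mathrm{Fano}_r(\mathcal{S}_A)$, proving both assertions. There is no real obstacle: the statement is a definition-chase, and the only point requiring care is the identification $\PP^0 \times \mathrm{Gr}(n-r,n) \cong \mathrm{Gr}(n-r,n)$ together with the observation that, when $k=1$, the generators of the support ideal and the scalar equations $A(y)\cdot z = 0$ coincide up to the nonzero scalar $z$.
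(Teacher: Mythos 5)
Your proposal is correct and follows essentially the same route as the paper: identify $\PP^0\times\Gr(n-r,n)$ with $\Gr(n-r,n)$, observe that $A(y)z=0$ for the unique $z\in\PP^0$ is just $A(y)=0$, and note that this is exactly the defining condition for $\mathcal{S}_A$ (whose $1\times 1$ minors are the entries $a_1,\dotsc,a_\ell$). The paper states this same definition-chase in the paragraph preceding the corollary; no further comment is needed.
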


The software  {\tt Macaulay2}
has a built-in command {\tt Fano} 
for computing the ideal of the Fano variety
$ {\rm Fano}_{r}(\mathcal{S}_A)$ from the entries of $A$.
Our results in this section extend this method.
We shall describe an algorithm for computing $\mathcal{P}^r_A$ 
and all the varieties introduced in Section \ref{sec4}.

Each of our varieties lies in a projective space or product of projective spaces.
What we seek is its {\em saturated ideal}. To explain what this means,
consider the variety $\mathcal{I}_A $ in $ \PP^{n-1} \times \PP^{k-1}$.
Its description in (\ref{eq:IA}) is easy.
The $\ell$ coordinates of $A(y)z$ are
  polynomials of 
bidegree $(d,1)$~in 
$$ \CC[y,z] \,\, = \,\, \CC[y_1,\ldots,y_n,z_1,\ldots,z_k] . $$
However, these $\ell$ polynomials do not suffice. The saturated ideal
of the variety  $\mathcal{I}_A$ equals
\begin{equation}
\label{eq:satideal}
\bigl( \,\bigl( \,\langle \,A(y) z \,\rangle \,: \langle y_1,\ldots,y_n \rangle^\infty\, \bigr) :
\langle z_1,\ldots,z_k \rangle^\infty \,\bigr).
\end{equation}
Saturation is a built-in command in {\tt Macaulay2} \cite{M2},
but its execution often takes a long time.
This crucial step removes extraneous contributions by the
irrelevant ideals of $\PP^{n-1}$ and $\PP^{k-1}$.
 
\begin{example}[$k=\ell=n=d=2$]
The parameters are as in Example \ref{ex:222}, but now~the entries of
$A$ are general quadrics in $\CC[y_1,y_2]$.
The variety $\mathcal{I}_A$ consists four points in $\PP^1 \times \PP^1$.
Its ideal (\ref{eq:satideal})  is
generated by six polynomials of bidegrees $(0,4),(1,2),(1,2),(2,1),(2,1),(4,0)$.
The first and last equation are binary quartics that define the
 projections $\mathcal{S}_A $ and $\mathcal{W}_A$  into~$\PP^1$.
 These data encode the general solution to the PDE $A$.
For a concrete example, consider 
  \begin{align*}
    A \,=\, \begin{bmatrix}
      \partial_1^2 + 4\partial_2^2 & 17\partial_1\partial_2 \\
      2\partial_1\partial_2 & 4\partial_1^2 + \partial_2^2
    \end{bmatrix}.
  \end{align*}
  Here the general solution $\phi: \RR^2 \rightarrow \CC^2$
     is given by the following superposition of waves
  \begin{align*}
    \phi(x_1,x_2) = \begin{bmatrix}
      -17 \\ 4
    \end{bmatrix}\alpha(2x_1+x_2) + \begin{bmatrix}
      17 \\ 4
    \end{bmatrix}\beta(-2x_1+x_2) + \begin{bmatrix}
      -2 \\ 1
    \end{bmatrix}\gamma(x_1+2x_2) + \begin{bmatrix}
      2 \\ 1
    \end{bmatrix}\delta(x_1-2x_2),
  \end{align*}
  where $\alpha,\beta,\gamma,\delta \in \mathcal{D}'$.
  This can also be found using the methods described in \cite{AHS21}.
\hfill $\triangle$
\end{example}

The points $\pi$ in the Grassmannian $\Gr(n-r,n)$
will be represented as in \cite[Section 5.1]{MS}.
We write $\pi$ as the rowspace of
an $(n -r) \times n$ matrix $S = (s_{ij})$, that is,
$\pi = \{ wS \,: \,w \in \CC^{n-r}\}$. For a subset $I$
of cardinality $n-r$ in $\{1,\ldots,n\}$, 
the corresponding subdeterminant of $S$ is denoted $p_I$.
Then $p = (p_I) \in \CC^{\binom{n}{r}}$ is the
vector of {\em Pl\"ucker coordinates} of $\pi$.
The resulting embedding of $\Gr(n-r,n)$ into
$\PP^{\binom{n}{r}-1}$ is defined by the ideal $G$ of
{\em quadratic Pl\"ucker relations} \cite[Section 5.2]{MS}.
Subvarieties of $\Gr(n-r,n)$ are represented by saturated ideals in
$\CC[p]/G$.
In the special case $r=n-1$, we identify the Pl\"ucker coordinates
$p$ with $y=(y_1,\ldots,y_n)$.

The wave pair variety $\mathcal{P}_A^r$ lives in
$\PP^{k-1} \times \PP^{\binom{n}{r}-1}$. We shall compute
its saturated ideal in the polynomial ring $\CC[z,p]/G$.
A pair $(z,\pi)$ lies in $ \mathcal{P}_A^r$ if and only if
$A(wS)z = 0$ for all $w \in \CC^{n-r}$.
To express this in Pl\"ucker coordinates, we proceed as follows.
Write the $\ell$ entries of $A(wS)z$ as linear combinations of the
monomials $w^\alpha$, $\alpha \in \mathbb{N}^{n-r}$, with coefficients
 in $\CC[z,S]$. Let $\mathcal{J} $ be the ideal generated by these coefficients, and consider
  the ring map $\psi: \CC[z,p]/G \rightarrow \CC[z,S]/\mathcal{J}$
  which fixes each $z_i$ and  maps $p_I$ to the corresponding minor of~$S$.

\begin{algorithm}
  \caption{The ideal of the wave pair variety in Pl\"ucker coordinates.}
  \label{alg:pair_variety}
  \begin{algorithmic}
    \Require 
    A matrix $A \in \CC[y_1,\dotsc,y_n]_d^{\ell \times k}$ and an integer
    $r \in \{0,1,\ldots,n-1\}$
    \Ensure The saturated ideal in $\CC[z,p]/G$ that defines $\mathcal{P}_A^r$
    as a subvariety of $ \PP^{k-1} \times \PP^{\binom{n}{r}-1}$
    \State $S \gets (s_{ij})$, an $(n-r) \times n$ matrix whose entries are variables
    \State $\mathcal{J} \gets $ the ideal in $\CC[z,S]$ generated by the coefficients of the 
    monomials $w^\alpha$ in $A(wS)u$
    \State $G \gets $ the ideal of quadratic Pl\"ucker relations in $\CC[p]$,
    as described in    \cite[Section 5.1]{MS}
    \State $T \gets \CC[z,p]/ G$, the coordinate ring of the ambient space $\PP^{k-1} \times \Gr(n-r, n)$
    \State $\psi \gets $ the map from $T $ to $ \CC[z,S]/\mathcal{J}$ 
    that sends $p_I \mapsto {\rm det}(S_I)$ and $z_i \mapsto z_i$
    \State Compute $\mathcal{I} = \ker \psi$ and write its generators in the polynomial ring $\CC[z,p]$
    \State \Return the ideal saturation $((\mathcal{I} : \langle z \rangle^\infty) : \langle p \rangle^\infty)$,
    as in (\ref{eq:satideal}).
  \end{algorithmic}
\end{algorithm}

To compute the ideal of the wave variety $\mathcal{W}_A^r$, one can 
now eliminate the Pl\"ucker variables from the output of Algorithm~\ref{alg:pair_variety}.
This corresponds to projecting onto the first factor of $\mathcal{P}_A^r$.

We implemented Algorithm~\ref{alg:pair_variety} in {\tt Macaulay2}. For the code and its documentation see
$$ \hbox{
\url{https://mathrepo.mis.mpg.de/makingWaves}.}$$
Our command \texttt{wavePairs(A,r)} returns generators of the saturated ideal of $\mathcal{P}_A^r$ in $\QQ[z,p]/G$, where $G$ is the Pl\"ucker ideal, given by the built-in command \texttt{Grassmannian(n-r-1, n-1)}.
Since Algorithm \ref{alg:pair_variety}
generalizes the  computation of Fano varieties, running it can be slow. 
A common method for speeding this up is to restrict to an affine patch of the Grassmannian.
The optional argument \texttt{Patch => ...} implements this.
If $\texttt{Patch}$ is set to \texttt{true}, then the leftmost $(n-r) \times (n-r)$ submatrix of $S$ is the identity, as in
\cite[eqn (5.2)]{MS}. But the user can also select other charts by specifying a list of indices.

\smallskip

We now come to the special case of first-order PDE ($d=1$).
These are ubiquitous in applications, and computing the corresponding wave pair varieties is easier.
Here we use the $\ell \times n$-matrix $C(z)$
given by $\,A(y) z = C(z) y $.
The $\mathcal{W}^r_A$ are the determinantal varieties of~$C(z)$.

\begin{corollary} 
Let $d=1$, with notation as in Proposition \ref{prop:Cu}.
The wave pair variety equals
$$ \mathcal{P}^r_A \,\,  = \,\,
\bigl\{ \,(z, \pi) \in \PP^{k-1} \times \Gr(n-r, n) \,\,\colon \,
\pi \subseteq {\rm kernel}(C(z)) \,\bigr\}. $$
\end{corollary}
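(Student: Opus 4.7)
The plan is to unpack the defining condition of $\mathcal{P}^r_A$ and apply the bilinear swap $A(y)z = C(z)y$ that is available because $d=1$. By definition,
$$(z,\pi) \in \mathcal{P}^r_A \quad \Longleftrightarrow \quad A(y)z = 0 \text{ for all } y \in \pi.$$
Since every entry of $A$ is linear in $y$, each coordinate of $A(y)z \in \CC^\ell$ is bilinear in $(y,z)$. Regrouping the monomials $y_i z_j$ by $y_i$ rather than by $z_j$ yields the factorization $A(y)z = C(z)y$ used in equation (\ref{eq:xiu}) and in the proof of Proposition~\ref{prop:Cu}. Substituting this identity, the defining condition above becomes $C(z)y = 0$ for all $y \in \pi$, which is exactly $\pi \subseteq \ker C(z)$.

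There is essentially no obstacle here; the identity $A(y)z = C(z)y$ is a formal rearrangement of a bilinear form already fixed by the paper's notation, and the rest is just reading off the definition of the wave pair variety. The only thing worth pointing out is that, since both sides are homogeneous in $z$ and invariant under scaling $\pi$ (equivalently, under scaling its Plücker coordinates), the equivalence descends correctly to the product $\PP^{k-1} \times \Gr(n-r,n)$. Thus the corollary follows at once from Proposition~\ref{prop:Cu}'s setup, and no separate saturation or elimination argument is needed.
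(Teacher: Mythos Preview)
Your proof is correct and matches the paper's approach: the paper states this corollary without a separate proof, treating it as immediate from the bilinear identity $A(y)z = C(z)y$ and the definition of $\mathcal{P}^r_A$, which is exactly what you spell out.
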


If $\pi$ is given as the row space of an
$(n-r) \times n $ matrix $S$ then 
$\pi \subseteq {\rm kernel}(C(z))$  means that
$C(z) \cdot S^T $ is the zero matrix of format $\ell \times (n-r)$.
Thus, $\mathcal{P}^r_A$ is a vector bundle over the wave
variety $\mathcal{W}^r_A$.
We shall explore these determinantal varieties for some scenarios
of geometric origin. These specify
PDE which admit interesting wave solutions
$x \mapsto \delta(Lx) \cdot u$. 

\begin{example}[Cubic Surfaces]
Every smooth cubic surface in $\PP^3$ is the determinant of a $3 \times 3$ matrix
of linear forms. The surface  contains $27$ lines, but that number can drop for
special cubics.
 We here present an example
with nine lines, namely Cayley's cubic surface:
$$ \qquad \qquad A \,\,= \,\, \begin{bmatrix}
\p_1 & \p_2 & \p_3 \\
\p_2 & \p_1 & \p_4 \\
\p_3 & \p_4 & \p_1 \end{bmatrix} \,\, ,\qquad 
 C \,\,= \,\, \begin{bmatrix} 
 z_1 & z_2 & z_3 & 0 \\
 z_2 & z_1 & 0 & z_3  \\
 z_3 &  0   & z_1 & z_2 \end{bmatrix} \qquad \qquad
(n=4, k=\ell=3).
$$
The only nontrivial wave variety consists of the six points in $\PP^2$
where $C(z)$ has rank $2$:
\begin{equation}
\label{eq:sixpoints}
\mathcal{W}^2_A \,= \,\mathcal{O}^3_A \,=\,
\bigl\{   \, (1:1:0), (1:-1:0), (1:0:1), (1: 0: -1), (0: 1: 1), (0: 1: -1) \,\bigr\}.
\end{equation}
The cubic surface $\mathcal{S}_A= \{\, y\in \PP^3 : {\rm det}(A(y)) = 0\, \}$
has four singular points. It is shown~in \cite[Figure 1.1]{MS}. Geometrically, $\mathcal{S}_A$
is the blow-up of $\PP^2$ at the six points  (\ref{eq:sixpoints}).
This is a general fact:
if $A$ is a $3 {\times} 3$ matrix representing a cubic surface then
its wave solutions are supported on the six lines, among   $27$, 
whose blow-down
maps the surface birationally onto~$\PP^2$.

The wave pair variety $\mathcal{P}^2_A $ lives in
$\PP^2 \times \PP^5$.  Its ideal is the output computed by
Algorithm~\ref{alg:pair_variety}:
$$ \begin{small} \begin{matrix}
\langle z_1,z_2{-}z_3,p_{14},p_{23},p_{24}{+}p_{34},p_{13}{-}p_{34},p_{12}{+}p_{34} \rangle \, \cap \,
\langle z_1,z_2{+}z_3,p_{14},p_{23},p_{24}{-}p_{34},p_{13}{+}p_{34},p_{12}{+}p_{34} \rangle \, \cap \,
\\
\langle z_2,z_1{-}z_3,p_{13},p_{24},p_{14}{+}p_{34},p_{23}{-}p_{34},p_{12}{-}p_{34} \rangle \, \cap \,
\langle z_2,z_1{+}z_3,p_{13},p_{24},p_{14}{-}p_{34},p_{23}{+}p_{34},p_{12}{-}p_{34} \rangle \, \cap \,
\\
\langle z_3,z_1{-}z_2,p_{12},p_{34},p_{14}{+}p_{24},p_{23}{+}p_{24},p_{13}{-}p_{24} \rangle \, \cap \,
\langle z_3,z_1{+}z_2,p_{12},p_{34},p_{14}{-}p_{24},p_{23}{-}p_{24},p_{13}{-}p_{24}\rangle.
\phantom{ \cap}
\end{matrix} \end{small}
$$
Its projection to $\PP^2$ is $\mathcal{W}^2_A$, while that to
$\PP^5$ yields six of the nine points in
${\rm Fano}_2(\mathcal{S}_A)$.
 \hfill $\triangle$
 \end{example}

We conclude by
  explicitly computing the wave pair varieties of certain operators that are prominent in the calculus of variations.
Such operators are built from ${\sf div}$, ${\sf curl}$, and ${\sf grad}$.
We refer to \cite[Example 2.1]{shankar_notes} for a warm-up from the control theory 
perspective of Section~\ref{sec3}.

Determinantal varieties are given by 
imposing rank constraints on matrices \cite[Lecture~9]{Harris}.
The following construction realizes
such varieties as wave cones of certain natural PDEs.

\begin{example}[Generic Determinantal Varieties]
Let ${\sf div} = (\p_1, \p_2, \ldots,\p_n)$, fix $p \geq 2$, 
and set $k= pn$, $\ell = p$. By taking the $p$-fold direct sum of ${\sf div}$,
we obtain the first order PDE
$$ A \quad = \quad  \begin{small}
\begin{bmatrix}
\,\,{\sf div} \,& 0 & \,\,\cdots \,\, & 0 \\
 0 &  \,{\sf div}\, & \cdots & 0 \\
 \vdots & & \ddots & \vdots \\
 0 & 0 & \cdots & \,{\sf div} \,\,\end{bmatrix} \end{small} $$
 for distributions $\phi : \RR^n \rightarrow \CC^{p \times n}$
 with coordinates $\phi_{ij}$, where $i=1,\ldots,p$ and $j=1,\ldots,n$.
 The matrix $C(z)$ defined by the bilinear equation $ A(y) z = C(z) y$ 
 has format $p \times n$. Its entries are distinct variables $z_{ij}$.
 The wave variety $\mathcal{W}_A^r \subset \PP^{pn-1}$ is the
 determinantal variety of all
 $p \times n$ matrices  $z$ of rank $\leq r$.
The wave pair variety $\mathcal{P}_A^r \subset \PP^{pn-1} \times \Gr(n-r,n)$
consists of pairs $(z,\pi)$ where $\pi$ 
 is in the kernel of $z$.
This is a {\em resolution of singularities} for
the determinantal variety $\mathcal{W}_A^r$. We refer to
Examples 12.1 and 16.18 in Harris' textbook \cite{Harris}.
  \hfill $\triangle$
 \end{example}

We next come to the curl operator, with its action on matrices as in \cite[Example 1.16~(c)]{KR}.
A first glimpse was seen in Example~\ref{ex:curl1}.
Fix any integer $n \geq 2$. We write {\sf curl} for the $ \binom{n}{2} \times n$ matrix 
whose rows are vectors $\partial_i e_j - \partial_j e_i$.
We take $A$ to be the $p$-fold direct sum of {\sf curl}.
This matrix has $\ell = p\binom{n}{2}$ rows and $k = pn$ columns. The following holds for this matrix $A$.

\begin{proposition} \label{prop:curl}
  Let $A$ be the curl operator for distributions $\phi:\RR^n \to \CC^{p\times n}$.
  The ideal of its wave pair variety $\mathcal{P}_A^{n-1} \subseteq \PP^{pn-1} \times \PP^{n-1}$ is 
  generated by the  $2 \times 2$ minors of the $(p{+}1) \times n$~matrix
\begin{equation}
\label{eq:augmentedmatrix}
    \begin{bmatrix}
        y_1 &  y_2 & \cdots & y_n \\
      z_{11} & z_{12} & \cdots & z_{1n} \\
      z_{21} & z_{22} & \cdots & z_{2n} \\
      \vdots & \vdots & \ddots & \vdots \\
      z_{p1} & z_{p2} & \cdots & z_{pn} 
      \end{bmatrix}.
\end{equation}
The wave variety $\mathcal{W}_A$ is similarly defined by the $2 \times 2$ minors of the $p \times n$ matrix $(z_{ij})$.
All other wave pair varieties $\mathcal{P}^r_A$ and wave varieties $\mathcal{W}^r_A$, indexed by $r \leq n-2$, are empty.
\end{proposition}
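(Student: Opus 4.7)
The plan is to unpack $A(y)z$ explicitly, recognize the wave pair condition as a rank-one condition on the augmented matrix in (\ref{eq:augmentedmatrix}), and then invoke the primality of generic determinantal ideals to upgrade the set-theoretic description to a statement about the saturated ideal.

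First, for the $a$-th copy of ${\sf curl}$ acting on the row $(z_{a1},\ldots,z_{an})$ of $z$, the entry indexed by $1\le i<j\le n$ is $y_iz_{aj} - y_jz_{ai}$. Thus the $p\binom{n}{2}$ components of $A(y)z$ are precisely the $2\times 2$ minors of (\ref{eq:augmentedmatrix}) that involve the top ($y$-)row together with one of the $z$-rows. Their simultaneous vanishing is equivalent to each row $(z_{a1},\ldots,z_{an})$ being a scalar multiple of $y$, which is in turn equivalent to the augmented $(p+1)\times n$ matrix having rank at most one, i.e.\ to the vanishing of \emph{all} its $2\times 2$ minors. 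Together with (\ref{eq:IPn-1}), this already identifies $\mathcal{P}_A^{n-1}$ set-theoretically.

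To promote the set equality to an ideal equality, I would show that the ``purely $z$'' minors $z_{ai}z_{bj}-z_{aj}z_{bi}$ lie in the saturation of $\langle A(y)z\rangle$ with respect to $\langle y_1,\ldots,y_n\rangle$, via the identity
\[
y_k\bigl(z_{ai}z_{bj} - z_{aj}z_{bi}\bigr) \,=\, z_{ak}\bigl(y_iz_{bj} - y_jz_{bi}\bigr) - z_{bj}\bigl(y_iz_{ak} - y_kz_{ai}\bigr) + z_{bi}\bigl(y_jz_{ak} - y_kz_{aj}\bigr),
\]
whose right hand side manifestly lies in $\langle A(y)z\rangle$. Hence the ideal $I$ generated by \emph{all} $2\times 2$ minors of (\ref{eq:augmentedmatrix}) is contained in the saturated ideal of $\mathcal{I}_A$. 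But $I$ is prime by the classical theorem on generic $2\times 2$ determinantal ideals (Hochster--Eagon; see also Bruns--Vetter), contains no coordinate variable, and cuts out $\mathcal{I}_A$ as a set. A prime ideal that cuts out the right variety and contains none of the irrelevant generators is automatically saturated and equals the saturated ideal of $\mathcal{I}_A$.

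For the wave variety $\mathcal{W}_A$, I would project $\mathcal{I}_A$ onto $\PP^{pn-1}$: a nonzero matrix $z$ lies in $\mathcal{W}_A$ iff some nonzero $y$ makes the augmented matrix have rank at most $1$, and taking $y$ to be any nonzero row of $z$ forces every other row to be proportional to it; equivalently, $(z_{ij})$ has rank at most one. So $\mathcal{W}_A$ is cut out by the $2\times 2$ minors of $(z_{ij})$, again a prime ideal. Finally, for $r\le n-2$, the subspace $\pi$ has dimension $n-r\ge 2$ and thus contains two linearly independent vectors $y,y'$; requiring each row of $z$ to be proportional to both forces $z=0$, which is not a point of $\PP^{pn-1}$. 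Hence $\mathcal{P}_A^r$ and its projection $\mathcal{W}_A^r$ are both empty. The only non-routine ingredient is the primality of the generic $2\times 2$ minor ideal, which I would invoke as a standard fact rather than reprove.
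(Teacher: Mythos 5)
Your proposal is correct and follows essentially the same route as the paper: identify the generators $y_iz_{aj}-y_jz_{ai}$ of $\langle A(y)z\rangle$ with the $2\times 2$ minors of \eqref{eq:augmentedmatrix} involving the top row, check that the remaining (purely $z$) minors lie in the saturation, and invoke primality of the generic $2\times 2$ determinantal ideal together with the set-theoretic equality to conclude. The one thing you add that the paper leaves implicit is the explicit syzygy $y_k(z_{ai}z_{bj}-z_{aj}z_{bi}) = z_{ak}(y_iz_{bj}-y_jz_{bi}) - z_{bj}(y_iz_{ak}-y_kz_{ai}) + z_{bi}(y_jz_{ak}-y_kz_{aj})$ certifying that the purely $z$ minors are in $(\langle A(y)z\rangle:\langle y\rangle^\infty)$, and your argument for $r\le n-2$ (two independent vectors in $\pi$ force $z=0$) is a direct rephrasing of the paper's rank observation about $C(z)$.
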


\begin{proof}
The ideal of the incidence variety $\mathcal{I}_A = \mathcal{P}_A^{n-1} $ is computed by the saturation
(\ref{eq:satideal}) from
$$ \langle \,A(y) z \,\rangle \,\,=\,\, \bigl\langle \,y_i z_{kj} - y_j z_{ki} \,: \,k = 1,\ldots,p \,\,\, {\rm and}\,\,\,
1 \leq i < j \leq n \,\bigr\rangle.
$$
This step removes contributions from the irrelevant maximal ideal.
Every $2 \times 2$ minor of (\ref{eq:augmentedmatrix}) lies in this saturated ideal.
Therefore, that ideal equals the prime ideal generated by all the $2 \times 2$ minors of (\ref{eq:augmentedmatrix}).
For $r \leq n-2$, we note that $A(y)z = C(z)y$ hands us the matrix
$$ C(z) \,\,\,= \,\,\, - \begin{bmatrix} {\sf curl}(z_{11},\ldots,z_{1n})  \\
 {\sf curl}(z_{21},\ldots,z_{2n}) \\ 
\cdots \qquad \cdots \\
{\sf curl}(z_{p1},\ldots,z_{pn}) \end{bmatrix}.
$$
One checks that this $p \binom{n}{2} \times n$  matrix cannot have rank $\leq n-2$ unless $z_{ij} = 0$ for all $i,j$.
\end{proof}

 \bigskip

\noindent
\footnotesize 
{\bf Authors' addresses:}

\smallskip

\noindent Marc H\"ark\"onen, Georgia Institute of Technology
\hfill {\tt harkonen@gatech.edu}

\noindent Jonas Hirsch, Universit\"at Leipzig
\hfill {\tt Jonas.Hirsch@math.uni-leipzig.de}

\noindent Bernd Sturmfels,
MPI-MiS Leipzig and UC Berkeley 
\hfill {\tt bernd@mis.mpg.de}
\end{document}